%%%%%%%%%%%%%%%%%%%%%%%%%%%%%%%%%%%%%%%%%%%%%%%%%%%%%%%
% % Lines starting with % are comments, which are ignored.
% % This is a handy way of indicating the date and version of
% % your document, to wit:
% %
% % LaTeX sample file
% % Modified March, 2002
% %
%%%%%%%%%%%%%%%%%%%%%%%%%%%%%%%%%%%%%%%%%%%%%%%%%%%%%%%
% % Title and author(s)
%%%%%%%%%%%%%%%%%%%%%%%%%%%%%%%%%%%%%%%%%%%%%%%%%%%%%%%
\title{Permutation Models Arising From Topological Ideals}
\author{Justin Young\thanks{
                  University of Florida}
        }
%%%%%%%%%%%%%%%%%%%%%%%%%%%%%%%%%%%%%%%%%%%%%%%%%%%%%%%
\documentclass{article}
%%%%%%%%%%%%%%%%%%%%%%%%%%%%%%%%%%%%%%%%%%%%%%%%%%%%%%%
% %
% % The next command allows your in import encapsulated
% % postscript files, .epsf or .eps files, which
% % contain vector graphic image data.
% %
%%%%%%%%%%%%%%%%%%%%%%%%%%%%%%%%%%%%%%%%%%%%%%%%%%%%%%%
\usepackage{graphicx}
\usepackage{amssymb, amsmath}
\usepackage{mathrsfs}
\usepackage{amsthm}
\usepackage{enumerate}
\usepackage{comment}
\usepackage{biblatex}
\addbibresource{Sources.bib}

%%%%%%%%%%%%%%%%%%%%%%%%%%%%%%%%%%%%%%%%%%%%%%%%%%%%%%%
% % We use newtheorem to define theorem-like structures
% %
% % Here are some common ones. . .
%%%%%%%%%%%%%%%%%%%%%%%%%%%%%%%%%%%%%%%%%%%%%%%%%%%%%%%

%\newenvironment{proof}{{\sc Proof:}}{~\hfill QED}

%\renewcommand{\pmod}[1]{\,(\textup{mod}\,#1)}

\newcommand{\beqs}{\begin{equation*}}
\newcommand{\eeqs}{\end{equation*}}
\newtheorem{theorem}{Theorem}[section]

\newtheorem{corollary}[theorem]{Corollary}

\newtheorem{defin}[theorem]{Definition}

\newtheorem{prop}[theorem]{Proposition}

\let\temp\bigg
\let\bigg\Big
\let\Big\temp
\let\frac\dfrac

\newcommand{\pstab}[1]{\text{pstab}(#1)}
\newcommand{\stab}[1]{\text{stab}(#1)}
\newcommand{\cone}[1]{\text{cone}(#1)}

\hyphenation{auto-maton}
%%%%%%%%%%%%%%%%%%%%%%%%%%%%%%%%%%%%%%%%%%%%%%%%%%%%%%%
% %   The first thanks indicates your affiliation
% %
% %  Just the name here.
% %
% % Your mailing address goes at the end.
% %
% % \thanks is also how you indicate grant support
% %
%%%%%%%%%%%%%%%%%%%%%%%%%%%%%%%%%%%%%%%%%%%%%%%%%%%%%%%

\begin{document}
\makeatletter
\def\imod#1{\allowbreak\mkern10mu({\operator@font mod}\,\,#1)}
\makeatother
\newpage
\maketitle
%%%%%%%%%%%%%%%%%%%%%%%%%%%
% abstract, keywords and Subject classification are optional.
%%%%%%%%%%%%%%%%%%%%%%%%%%%
\begin{abstract}
    A recent paper by Zapletal \cite{dideals2} discusses permutation models of set theory which arise from dynamical ideals and highlights properties of the dynamical ideal which relate to fragments of choice in the permutation model. In this paper, we provide several examples from topology which illustrate using these connections to argue that the corresponding permutation model satisfies either the axiom of countable choice or well-ordered choice.
\end{abstract}

% Most people don't use these, so they are "commented out"
% by starting the lines with a "%"
%\begin{keywords}
%   \LaTeX, typesetting
%\end{keywords}

%\begin{AMS}
%   50C60, 18C25
%\end{AMS}

%%%%%%%%%%%%%%%%%%%%%%
% % Here is the start of the Text
%%%%%%%%%%%%%%%%%%%%%%

\section{Introduction} In \cite{dideals2}, the notion of dynamical ideals are introduced as a method of constructing permutation models of set theory, and various theorems are proved that relate dynamical properties of the dynamical ideal with fragments of the axiom of choice in the corresponding model, allowing one to determine properties of the model without making explicit reference to the model. In principle, the dynamical properties of a dynamical ideal depend on both the choice of ideal and the underlying space, and in this paper, we look at examples of ideals on natural spaces which satisfy two of those dynamical properties. 

In particular, we show that the ideals generated by closed sets in the following spaces are dynamically \(\sigma\)-complete (definition \ref{Dyn-Sig C.}): \(2^\omega\) (Theorem \ref{CC 2w}), \(\omega^\omega\) (Theorem \ref{CC ww}), \(\mathbb{R}^n\) (Theorem \ref{CC Rn}). We also show that the following ideals support a cone measure (definition \ref{Cone M}): the ideals generated by closed nowhere dense sets on \(2^\omega\) (Theorem \ref{2w-cone}) or any manifold (Theorem \ref{Carpet-cone}), and the ideal generated by compact totally disconnected sets in \(\mathbb{R}^2\) (Theorem \ref{Cone Tot-Disc}). We also show that the ideal generated by compact nowhere dense sets in \(\mathbb{R}^n\) supports a cone measure (Theorem \ref{Compact NWD Cone}), yet give an example of a manifold where the same ideal fails to support a cone measure (Theorem \ref{Compact Non D-Sig C}). This paper leaves open multiple problems, such as the status of the ideal generated by countable closed sets or the ideal generated by closed nowhere dense sets in infinite dimensional spaces.  Additionally, the status of the ideal generated by compact totally disconnected sets in \(\mathbb{R}^n\) for \(n\geq 3\) remains unknown.

This first section introduces the relevant definitions of dynamical ideals, permutation models, and the relevant dynamical properties that we are concerned with. Section 2 investigates examples of dynamical ideals which are dynamically \(\sigma\)-complete (Definition \ref{Dyn-Sig C.}) and Section 3 investigates examples of dynamical ideals which support a cone measure (Definition \ref{Cone M}).
\begin{defin}
    A dynamical ideal is a tuple which consists of a group \(\Gamma\), its action on a set \(X\), and an ideal \(I\) on \(X\) which is invariant under the group action. We denote a dynamical ideal by \((\Gamma\curvearrowright X, I)\).
\end{defin}
This paper will be primarily concerned with dynamical ideals where the underlying set \(X\) is taken to be a topological space and the group \(\Gamma\) is the group of self-homeomorphisms of \(X\) acting by application - unless otherwise stated, the acting group should be taken to be the group of self-homeomorphisms of \(X\). In our discussion of topological spaces, we will use the following notation: given a set \(a\), \(\overline{a}\) denotes the closure, \(bd(a)\) denotes the boundary, and \(int(a)\) denotes the interior of \(a\). Additionally, when a metric is present, \(diam(a)\) denotes the diameter according to the metric, and \(Ball(a,\varepsilon)\) denotes the set of all points \(x\) such that \(d(x,a)<\varepsilon\). If \(x\) is a point, then \(Ball(x,\varepsilon)=Ball(\{x\},\varepsilon).\)
\begin{defin}
    Given a model of set theory with atoms \(V[[X]]\) where \(X\) is the set of atoms, a group action \(\Gamma\curvearrowright X\) extends to a group action \(\Gamma\curvearrowright V[[X]]\) in a natural way: for \(A\in V[[X]]\) such that \(A\notin X\) and \(\gamma\in \Gamma\), let \(\gamma\cdot A=\{\gamma\cdot a: a\in A\}\). We let \(pstab(A)=\{\gamma\in \Gamma: \text{ for all }a\in A, \gamma\cdot a=a\}\) denote the pointwise stabilizer, and \(stab(A)=\{\gamma\in \Gamma: \gamma\cdot A=A\}\) denote the stabilizer.
\end{defin}
\begin{defin}
    Given a dynamical ideal \((\Gamma \curvearrowright X, I)\) and a model of set theory with atoms using \(X\) as the set of atoms \(V[[X]]\), we define the permutation model corresponding to the dynamical ideal by as the transitive part of \(\{A\in V[[X]]: \text{ there exists \(b\in I\) such that \(pstab(b)\subseteq stab(A)\)}\}\). The permutation model is denoted by \(W[[X]].\)
\end{defin}

\begin{defin}\label{Dyn-Sig C.}
    We say the dynamical ideal \((\Gamma\curvearrowright X, I)\) is dynamically \(\sigma\)-complete if for all \(a\in I\) and any countable sequence \((b_i:i\in \omega)\) of sets in \(I\) there exist group elements \(\gamma_i\in pstab(a)\) such that \(\bigcup_{i=0}^\omega \gamma_i\cdot b_i \in I\).
\end{defin}
Theorem 4.3 of \cite{dideals2} states that if a dynamical ideal is dynamically \(\sigma\)-complete, then the associated permutation model satisfies the axiom of countable choice.
\begin{defin}
    We say the dynamical ideal \((\Gamma\curvearrowright X, I)\) has cofinal orbits if for every \(a\in I\) there exists \(b\in I\) which is \(a\)-large: for any \(c\in I\) there exists \(\gamma\in\pstab{a}\) such that \(c\subseteq \gamma\cdot b\).
\end{defin}
Theorem 3.3 of \cite{dideals2} states that if a dynamical ideal has cofinal orbits, then the associated permutation model satisfies the axiom of well-ordered choice. The paper goes on to identify a stronger condition visible in the permutation model which implies that the ideal has cofinal orbits:
\begin{defin}\label{Cone M}
    A dynamical ideal \((\Gamma\curvearrowright X, I)\) supports a cone measure if in the associated permutation model \(W[[X]]\), there is a set \(C\subseteq I\) which is cofinal with respect to inclusion and such that for every set \(D\subseteq C\) there is \(a\in I\) such that the set \(\cone a:=\{b\in C:a\subseteq b\}\) is either a subset of \(D\) or disjoint from \(D\). The set \(C\) is referred to as a support of the cone measure.
\end{defin}

\section{Dynamically \(\sigma\)-Complete Ideals}
In this section, we present examples of dynamically \(\sigma\)-complete ideals. We first make the following definitions: \begin{defin}
    We say a topological space together with an ideal \((X,\tau,I)\) is tight if for any \(a\in I\), \(a\) can be expanded to a set \(a'\in I\) such that for any \(b\in I\) and any open neighborhood \(U\in\tau\) of \(a'\), there exists \(\gamma\in pstab(a')\) with \(\gamma\cdot b\subseteq U\). In the case that \((X,\tau)\) is metrizable, it is equivalent to fix a compatible metric and just consider neighborhoods \(U\) which are open metric balls about \(a'\).
\end{defin} 
\begin{defin}
    Let \((X,\tau)\) be a topological space. An ideal \(I\) is called a \(\sigma\)-ideal of closed sets if it has the form \(I=\{A\subseteq X:\overline{A}\in J\}\) for some \(\sigma\)-ideal \(J\). If \(I\) instead has the form \(I=\{A\subseteq X: \overline{A}\in J \text{ is compact}\}\) then we say \(I\) is a \(\sigma\)-ideal of compact sets.
\end{defin}
When \(I\) is a \(\sigma\)-ideal of closed sets, tightness of a sequential and perfectly normal Hausdorff space is enough to guarantee that the ideal is dynamically \(\sigma\)-complete, as established in the following proposition:
\begin{prop}
    Let \((X,\tau)\) be a sequential and perfectly normal Hausdorff space, let \(\Gamma\) be the group of self-homeomorphisms acting by application, and let \(I\) be a \(\sigma\)-ideal of closed sets. Then if \((X,\tau, I)\) is tight, then the dynamical ideal \((\Gamma\curvearrowright X, I)\) is dynamically \(\sigma\)-complete. In particular, the result holds if \(X\) is metrizable. Further, if \(X\) locally compact, then the result holds even if \(I\) is a \(\sigma\)-ideal of compact sets.
\end{prop}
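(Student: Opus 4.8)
The plan is to reduce the problem to a purely topological statement about closures. First I would use tightness to expand $a$ to a set $a'\in I$ with the tightness property, and observe that $\pstab{a'}\subseteq\pstab{a}$ since $a\subseteq a'$; hence it suffices to produce the required group elements $\gamma_i$ inside $\pstab{a'}$. Writing $I=\{A:\overline A\in J\}$ for the witnessing $\sigma$-ideal $J$, the goal becomes to choose $\gamma_i\in\pstab{a'}$ so that $\overline{\bigcup_i\gamma_i\cdot b_i}\in J$. The natural candidate upper bound is $C:=\overline{a'}\cup\bigcup_i\overline{\gamma_i\cdot b_i}$: each $\overline{\gamma_i\cdot b_i}=\gamma_i\cdot\overline{b_i}$ lies in $J$ because $I$ is $\Gamma$-invariant (so $\gamma_i\cdot b_i\in I$), and $\overline{a'}\in J$; thus $C$ is a countable union of $J$-sets and lies in $J$. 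If $C$ is closed, then $\bigcup_i\gamma_i\cdot b_i\subseteq C$ forces $\overline{\bigcup_i\gamma_i\cdot b_i}\subseteq C\in J$, and since $J$ is downward closed we are done.

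Everything therefore reduces to arranging that $C$ be closed, and this is where the remaining hypotheses enter. Using perfect normality I would fix a continuous $f:X\to[0,1]$ with $\overline{a'}=f^{-1}(0)$ and set $V_m:=f^{-1}([0,1/m))$; these form a decreasing sequence of open neighborhoods of $a'$ with $\overline{V_m}\subseteq f^{-1}([0,1/m])$, hence $\bigcap_m\overline{V_m}=f^{-1}(0)=\overline{a'}$. Applying tightness to $a'$, the set $b_i$, and the neighborhood $V_{i+1}$ yields $\gamma_i\in\pstab{a'}$ with $\gamma_i\cdot b_i\subseteq V_{i+1}$, so that $\overline{\gamma_i\cdot b_i}\subseteq\overline{V_{i+1}}\subseteq V_i$.

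The main step is then to show $C$ is closed, for which I would invoke that $X$ is sequential: it is enough to check $C$ is sequentially closed. Let $x_n\to x$ with $x_n\in C$. If one of the closed sets $\overline{a'},\overline{\gamma_0\cdot b_0},\overline{\gamma_1\cdot b_1},\dots$ contains infinitely many $x_n$, then $x$ lies in that set and hence in $C$. Otherwise, for each $m$ the finite union $\overline{a'}\cup\bigcup_{i<m}\overline{\gamma_i\cdot b_i}$ contains only finitely many $x_n$, so eventually $x_n\in\bigcup_{i\ge m}\overline{\gamma_i\cdot b_i}\subseteq V_m$ and therefore $x\in\overline{V_m}$; as this holds for every $m$, $x\in\bigcap_m\overline{V_m}=\overline{a'}\subseteq C$. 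I expect this verification—that the only new limit points created by amalgamating infinitely many pieces land back in $\overline{a'}$—to be the crux of the argument, and it is exactly where the interplay between the sequential hypothesis and the choice $\bigcap_m\overline{V_m}=\overline{a'}$ is used. The metrizable case is then immediate, since metric spaces are first countable (hence sequential) and perfectly normal; one may simply take $f(x)=d(x,\overline{a'})$, so that $V_m=Ball(\overline{a'},1/m)$, matching the metric formulation of tightness.

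Finally, for the locally compact case with $I$ a $\sigma$-ideal of compact sets, the same scheme works once we also trap $C$ inside a compact set. By local compactness I would choose an open $O\supseteq\overline{a'}$ with $\overline O$ compact, and run the construction above applying tightness with the neighborhoods $V_{i+1}\cap O$ instead, so that in addition $\overline{\gamma_i\cdot b_i}\subseteq\overline O$. Then $C\subseteq\overline O$ is a closed subset of a compact set, hence compact, and $\overline{\bigcup_i\gamma_i\cdot b_i}\subseteq C$ is a closed subset of $C$ and therefore compact as well; since it also lies in $J$, it witnesses $\bigcup_i\gamma_i\cdot b_i\in I$.
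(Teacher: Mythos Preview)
Your proposal is correct and follows essentially the same route as the paper: expand $a$ to a tight $a'$, use perfect normality to obtain a shrinking system of open neighborhoods whose closures intersect down to $\overline{a'}$, push each $b_i$ into successively smaller neighborhoods via tightness, and invoke sequentiality to verify that the amalgamated set (together with $\overline{a'}$) is closed. The only cosmetic differences are that you track closures explicitly rather than assuming the $b_i$ closed at the outset, and in the locally compact addendum you intersect with one fixed relatively compact $O$ rather than choosing each $U_n$ to have compact closure.
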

\begin{proof}
    Let \(J\) be a \(\sigma\)-ideal such that \(I=\{a\subseteq X:\overline{a}\in J\}\). Let \(a\in I\) and \((b_i:i\in \omega)\subseteq I\) be given - without loss of generality assume each of these sets are closed. Enlarge \(a\) to \(a'\in I\) as needed to witness that \((X,\tau,I)\) is tight, and let \((U_i:i\in\omega)\) be a sequence of open neighborhoods of \(a'\) such that \(U_i\supseteq \overline{U_{i+1}}\) and \(\bigcap_{i\in\omega}U_i=a'\) witnessing that \((X,\tau)\) is perfectly normal. Let \(\gamma_i\in\pstab {a'}\) be such that \(\gamma_i\cdot (b_i)\subseteq U_i\). 
    
    Letting \(c=a'\cup \bigcup_{i\in \omega} \gamma_i\cdot b_i\), we note that since \(a'\) and each of the \(\gamma_i\cdot b_i\) belong to \(J\), it follows that \(c\in J\). To conclude that \(c\in I\), it remains to show that \(c\) is closed. To see this, let \(x\) belong to the closure of \(c\) and use the fact that the space is sequential to find a sequence \((x_i:i\in \omega)\) in \(c\) which converges to \(x\). If there is some \(\gamma_k\cdot b_k\) which the sequence \((x_i)\) visits infinitely often, then we can pass to a subsequence and use the fact that \(\gamma_k\cdot b_k\) is closed to conclude \(x\in \gamma_k\cdot b_k\). If not, then \((x_i:i\in\omega)\) visits each \(\gamma_k\cdot b_k\) finitely many times and we can pass to a subsequence \((x_{i_\ell}:\ell\in\omega)\) such that if \(x_{i_\ell}\in b_k\), then \(\ell\leq k\). We show that \(x\in a'\) by showing that \(x\in U_k\) for each \(k\): let \(k\in \omega\) be given, and suppose towards a contradiction that \(x\not\in U_k\). Then \(x\not\in \overline{U_{k+1}}\), and we can find an open neighborhood \(V\ni x\) which separates \(x\) from \(U_{k+1}\). But note that by assumption, \(U_{k+1}\) and \(V\) both contain all but finitely many elements from the sequence \((x_{i_\ell}:\ell\in\omega)\), a contradiction. 

    When \(X\) is locally compact and we consider the ideal generated by countable compact sets, we modify the above argument by choosing neighborhoods \(U_n\) of \(a'\) such that \(\overline{U_n}\) is compact. Hence in the end, \(c\subseteq U_0\) is also compact.
\end{proof}

We can use the above result to show that certain topological spaces yield dynamically \(\sigma\)-complete ideals. The next few theorems illustrate spaces which are tight, hence correspond to a dynamical ideal which is dynamically \(\sigma\)-complete.

\begin{theorem}\label{CC 2w}
    The Cantor space \(X=2^\omega\) with the usual topology and the ideal generated by countable closed subsets is tight.
\end{theorem}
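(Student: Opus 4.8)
The plan is to verify tightness directly from the metric description of the ideal, which here is $I=\{A\subseteq 2^\omega:\overline A\text{ is countable}\}$. Given $a\in I$, I would set $a'=\overline a$ when $a\neq\emptyset$ and $a'=\{p\}$ for an arbitrary point $p$ when $a=\emptyset$; in either case $a\subseteq a'\in I$ and $a'$ is a nonempty countable closed set, hence nowhere dense. Since $2^\omega$ is metrizable it suffices to treat neighborhoods of the form $U=Ball(a',\varepsilon)$, and since passing from $b$ to $\overline b$ only strengthens the conclusion, I may assume $b$ is countable closed. If $U=2^\omega$ the identity works, so I assume $U\neq 2^\omega$, whence $2^\omega\setminus U\neq\emptyset$.

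The key idea I would use is to realize the required $\gamma$ as a single clopen transposition localized inside $U$, rather than attempting to ``push'' $b$ toward $a'$ globally. Because $a'\cup b$ is countable it is nowhere dense, so the nonempty open set $U\setminus(a'\cup b)$ contains a basic clopen cylinder $W$; note $W\cong 2^\omega$, $W\subseteq U$, and $W\cap(a'\cup b)=\emptyset$. Using zero-dimensionality and compactness of $2^\omega$, I would then choose a clopen set $V$ with $a'\cup W\subseteq V\subseteq U$, so that $2^\omega\setminus V\neq\emptyset$ since $V\subseteq U\neq 2^\omega$. Finally I define $\gamma$ to be the identity on the clopen set $V\setminus W$, and on the two clopen sets $W$ and $2^\omega\setminus V$ I let $\gamma$ interchange them via fixed homeomorphisms $W\to 2^\omega\setminus V$ and $2^\omega\setminus V\to W$; such homeomorphisms exist because every nonempty clopen subset of $2^\omega$ is homeomorphic to $2^\omega$.

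The remaining verifications I expect to be routine. The sets $V\setminus W$, $W$, and $2^\omega\setminus V$ partition $2^\omega$ into clopen pieces on each of which $\gamma$ is a homeomorphism onto a clopen piece, so $\gamma$ is a self-homeomorphism. Since $a'\subseteq V$ and $a'\cap W=\emptyset$, we have $a'\subseteq V\setminus W$, an open set on which $\gamma$ is the identity, so $\gamma\in\pstab{a'}$ and continuity at the points of $a'$ is automatic. Writing $b=(b\cap(V\setminus W))\cup(b\cap W)\cup(b\cap(2^\omega\setminus V))$ and using $b\cap W=\emptyset$, the first summand is fixed and lies in $V$ while the third is mapped into $W\subseteq V$, so $\gamma\cdot b\subseteq V\subseteq U$, as required. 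The genuine obstacle is conceptual rather than computational: a naive attempt to compress all of $b$ into $U$ by a homeomorphism fixing $a'$ cannot succeed, since such a map can neither carry the whole space into the proper clopen neighborhood $U$ (surjectivity) nor move the part of $b$ clustering at $a'$ away from $a'$ (continuity at $a'$). Localizing the motion to a single $b$-avoiding clopen cell inside $U$ circumvents both difficulties, and the only facts I need are the nowhere density of $a'\cup b$ and the homogeneity of clopen subsets of Cantor space.
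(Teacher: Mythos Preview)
Your proof is correct. Both arguments rest on the same mechanism---use nowhere density of $a'\cup b$ to locate a ``trash'' clopen cell inside a clopen neighborhood $V$ of $a'$, then define $\gamma$ as a clopen swap that dumps everything outside $V$ into that cell---so the approaches are essentially the same. The execution differs slightly: the paper covers $b\setminus V$ by finitely many basic cylinders $N_{s_0},\dots,N_{s_k}$ and swaps each with its own subtree of the trash cell $\mathcal O$, whereas you swap the entire complement $2^\omega\setminus V$ with a single cell $W$ in one stroke. Your version is a little cleaner (no finite cover of $b'$, no subdivision of $\mathcal O$ into the $V_i$) and is in fact exactly the argument the paper gives for the Baire space in Theorem~\ref{CC ww}; it transfers to $2^\omega$ without change because every nonempty clopen subset of $2^\omega$ is again homeomorphic to $2^\omega$. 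You also handle the edge case $a=\emptyset$ explicitly by enlarging to a singleton, a point the paper's proof leaves implicit.
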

\begin{proof}
    Let \(a,b\in I\) be two closed sets, and let \(U\) be an open neighborhood of \(a\). By compactness of \(a\), we can find a clopen set \(V\subseteq U\) which contains \(a\); write \(b'=b\setminus U\) to obtain a closed set which is disjoint from \(V\). Let \(\mathcal{C}\) be a cover of \(b'\) by pairwise disjoint neighborhoods of the form \(N_s=\{x:x(n)=s(n) \text{ for all } n<|s|\}\) for a finite string \(s\in 2^{<\omega}\) - by compactness, \(\mathcal{C}=\{N_{s_0},\dots,N_{s_k}\}\) can be taken to be finite. Because \(a\cup b\) is closed and countable, \(a\cup b\) is nowhere dense, so let \(\mathcal{O}\subseteq V\) be an open open set disjoint from \(a\cup b\) with the form \(\mathcal{O}=N_t\) for some finite string \(t\in 2^{<\omega}\). Let \(V_i=N_{t_i}\) where \(t_i=t^\frown 1^\frown\dots^\frown 1^\frown 0\) is the string obtained from \(t\) by concatenating \(i\) 1s and then a final 0. We define a homeomorphism \(\varphi_i:(N_{s_i}\cup V_i)\rightarrow (N_{s_i}\cup V_i)\) for \(i\leq k\) by \(\varphi_i(s_i^\frown y)=t_i^\frown y\) and \(\varphi_i(t_i^\frown y)=s_i^\frown y\). In words, the map \(\varphi_i\) swaps the sub-tree of \(2^\omega\) which starts at \(s_i\) with the subtree which starts at \(t_i\). The union of these homeomorphisms, together with the identity elsewhere on \(2^\omega\) defines a homeomorphism of the entire Cantor space. By construction, the homeomorphism fixes \(a\) pointwise and is such that the image of \(b\) is contained in \(V\).
    \end{proof}
We obtain a similar result for the Baire space \(\omega^\omega\), but must first establish a preliminary proposition:
\begin{prop}\label{clopen shrink}
    Let \(a\subseteq \omega^\omega\) be closed countable, and \(U\subseteq\omega^\omega\) an open subset containing \(a\). There is a clopen subset \(V\subseteq U\) which contains \(a.\)
\end{prop}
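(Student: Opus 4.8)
The plan is to build $V$ directly from the clopen cylinder basis of $\omega^\omega$, leaning on zero-dimensionality rather than the compactness that drove the Cantor-space argument in Theorem~\ref{CC 2w}. For a finite string $s$ write $N_s=\{x\in\omega^\omega: x(n)=s(n)\text{ for all }n<|s|\}$ for the basic clopen cylinder, and call $s$ \emph{good} if $N_s\subseteq U$ and $N_s\cap a\neq\emptyset$. Since $U$ is open, every $x\in a$ has an initial segment $x\restriction n$ with $N_{x\restriction n}\subseteq U$, and this segment is good; thus the good cylinders cover $a$ while staying inside $U$. Let $A$ be the set of those good strings that are minimal in the prefix order (a minimal good prefix exists below any good string because a fixed string has only finitely many prefixes), and set $V=\bigcup_{s\in A}N_s$. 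Then $A$ is an antichain, $V$ is open, $\bigcup\{N_s:s\text{ good}\}=V$, and consequently $a\subseteq V\subseteq U$.

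The only remaining point, and the crux of the proof, is that $V$ is closed; this is exactly where $\omega^\omega$ differs from $2^\omega$, because $A$ may be infinite and there is no compactness available to reduce to a finite subcover. I would argue as follows. Suppose $z\in\overline V\setminus V$. For each $m$ the neighborhood $N_{z\restriction m}$ meets some $N_t$ with $t\in A$, and two cylinders intersect iff their defining strings are comparable, so $z\restriction m$ and $t$ are comparable. The case $t\subseteq z\restriction m$ would give $z\in N_t\subseteq V$, contradicting $z\notin V$; hence $z\restriction m\subsetneq t_m$ for some $t_m\in A$. Choosing witnesses $y_m\in N_{t_m}\cap a$ (possible since $t_m$ is good), each $y_m$ agrees with $z$ on the first $m$ coordinates, so $y_m\to z$, and since $a$ is closed, $z\in a$. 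But every point of $a$ has a good initial segment and hence a prefix lying in $A$, which forces $z\in V$ --- a contradiction. Therefore $V$ is clopen, as desired.

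I expect the closedness step to be the genuine obstacle, precisely because the covering antichain $A$ need not be finite; the argument above sidesteps this by reducing any candidate boundary point to a limit of points of $a$ and then invoking that $a$ is closed, together with the observation that membership in $a$ already guarantees membership in $V$. I note in passing that this reasoning uses only that $a$ is closed and $U$ is open: the countability hypothesis, while being the relevant case for the ideal of countable closed sets, is not actually needed for the conclusion here.
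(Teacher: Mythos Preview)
Your argument is correct and is genuinely different from the paper's. The paper proceeds by transfinite induction on the Cantor--Bendixson rank of \(a\): at the top level \(a^{(\alpha)}\) is discrete, so one covers it by a sequence of clopen balls of shrinking diameter, uses the ultrametric to verify the union is clopen, removes it, and recurses on the remainder of lower rank. Your approach instead builds \(V\) in one shot as the union of the minimal cylinders meeting \(a\) and contained in \(U\), and handles closedness by the observation that any putative boundary point of \(V\) is a limit of points of \(a\), hence lies in \(a\), hence already lies in \(V\).

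Your route is more elementary (no transfinite induction, no ultrametric estimate) and, as you note, strictly more general: it uses only that \(a\) is closed and \(U\) is open, so the countability hypothesis is superfluous. The paper's induction, by contrast, genuinely needs the Cantor--Bendixson analysis and hence the countability (or at least scatteredness) of \(a\); its virtue is mainly structural, mirroring the CB-rank arguments used later for covers in \(\mathbb{R}^n\) (Proposition~\ref{disjointify}).
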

\begin{proof}
    We prove by induction on the Cantor Bendixson rank of \(a\). Let \(a,U\) be given as in the statement of the proposition, and let \(\alpha\) be the least ordinal such that \(a^{(\alpha)}\neq \emptyset\) and \(a^{(\alpha+)}=\emptyset.\) Then \(a^{\alpha}\) contains no limit points; let \(b=a^{(\alpha)}\) and write  \(b=\{x_i:i\in\omega\}\), fix an ultrametric \(d\) on \(\omega^\omega\) and recursively construct sets \(V_i\subseteq U\) such that \begin{enumerate}
        \item \(V_j\) is clopen
        \item for \(j<i\), \(V_i\cap V_j=\emptyset\)
        \item \(diam(V_i)<2^{-i}\)
        \item If \(x_i\in V_j\) for some \(j<i\), then \(V_i=\emptyset\), and otherwise \(V_i\ni x_i\).
    \end{enumerate}
    We note that since \(U\setminus \bigcup_{j<i} V_j\) is open, it is easy to construct \(V_i\) in this manner by choosing it to be a neighborhood of \(x_i\). In the end, \(V=\bigcup_{i\in\omega}V_i\) is open; to see \(V\) is closed, suppose \(y\in \overline{V}\), and let \((y_m:m\in\omega)\) be a sequence of elements from \(V\) which converges to \(y\). Each \(y_m\) belongs to a unique \(V_i\), denote the particular set by \(V_{i_m}\), and observe that this induces a sequence of elements of \(b\), namely \((x_{i_m}:m\in\omega)\). Use the fact that \(d\) is an ultrametric to write \(d(x_{i_m},y)\leq \max\{d(x_{i_m},y_m),d(y_m,y)\}\). If \(y=x_{i_m}\) for some \(m\), then \(y\in V\). Otherwise, by the assumption that \(b\) contains no accumulation points, we observe that \(d(x_{i_m},y)\) is bounded away from \(0\), and hence so is \(d(x_{i_m},y_m)\). In particular, the sequence \(diam(V_{i_m})\) is bounded away from \(0\), so we conclude the sequence visits only finitely many \(V_i\), and since the union of finitely many \(V_i\) is closed, we see that \(y\in V\).

    Now observe that \(a^*=a\setminus V\) is closed and has Cantor Bendixson rank \(\leq \alpha\). By induction, there is a clopen set \(V^*\subseteq U\setminus V\) which contains \(a^*\), and hence \(V\cup V^*\) is clopen and contains \(a\).
\end{proof}
\begin{theorem}\label{CC ww}
    The Baire space \(X=\omega^\omega\) with the usual topology and the ideal generated by countable closed subsets is tight.
\end{theorem}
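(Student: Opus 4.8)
The plan is to mirror the argument for the Cantor space (Theorem \ref{CC 2w}), replacing every appeal to compactness with Proposition \ref{clopen shrink}, and to verify tightness directly with \(a'=a\). Given closed countable sets \(a,b\) and an open neighborhood \(U\) of \(a\), I would first apply Proposition \ref{clopen shrink} to produce a clopen set \(V\) with \(a\subseteq V\subseteq U\), so that it suffices to move \(b':=b\setminus V\) into \(V\). Since \(V\) is clopen, \(b'\) is again closed and countable, so a second application of Proposition \ref{clopen shrink} (to \(b'\) and the open set \(\omega^\omega\setminus V\)) yields a clopen set \(W\) with \(b'\subseteq W\subseteq\omega^\omega\setminus V\). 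Because \(W\) is open, I can write it as a disjoint union \(W=\bigsqcup_{i\in I}N_{s_i}\) of basic clopen cylinders indexed by some \(I\subseteq\omega\), taking the minimal cylinders contained in \(W\).

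Next I would carve out room inside \(V\). As \(a\cup b\) is a closed countable subset of the perfect space \(\omega^\omega\), it is nowhere dense, so there is a basic cylinder \(N_t\subseteq V\) disjoint from \(a\cup b\). Setting \(t_i=t^\frown i\) and \(V_i=N_{t_i}\) gives pairwise disjoint clopen cylinders sitting inside \(N_t\). I then define \(\gamma\) to swap each \(N_{s_i}\) with \(V_i\) via \(s_i^\frown y\mapsto t_i^\frown y\) and to act as the identity off \(\bigcup_{i\in I}(N_{s_i}\cup V_i)\), exactly as in the Cantor-space construction but now permitting countably many swaps.

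The three things to check are that \(\gamma\) is a homeomorphism, that \(\gamma\) fixes \(a\) pointwise, and that \(\gamma\cdot b\subseteq U\). The last two are bookkeeping: \(a\subseteq V\) and \(N_t\) is disjoint from \(a\), so \(a\) misses the support of \(\gamma\) and is fixed; and since \(N_t\) is disjoint from \(b\), the part \(b\cap V\) is left fixed inside \(V\subseteq U\) while \(b'\subseteq W\) is carried into \(N_t\subseteq V\subseteq U\), giving \(\gamma\cdot b\subseteq V\subseteq U\).

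The step I expect to be the genuine obstacle — and the reason the Cantor-space proof does not transfer verbatim — is checking that \(\gamma\) is continuous when \(I\) is infinite. The essential point, special to Baire space, is that the target cylinders \(V_i=N_{t^\frown i}\) do not accumulate anywhere: any convergent sequence drawn from \(\bigcup_{i\in I}V_i\) must be eventually constant in the \(|t|\)-th coordinate and hence lies in a single \(V_i\). Consequently \(\bigcup_{i\in I}V_i\) is clopen, and likewise the whole support \(W\cup\bigcup_{i\in I}V_i\) is clopen; every point then has a clopen neighborhood — its own cylinder, or the complement of the support — on which \(\gamma\) agrees with a single homeomorphism, yielding continuity of \(\gamma\) and, since \(\gamma\) is an involution, of its inverse \(\gamma^{-1}=\gamma\). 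This is exactly where packaging the cover of \(b'\) into the single clopen set \(W\) via Proposition \ref{clopen shrink}, rather than settling for an arbitrary open cover, does the work that compactness did in Theorem \ref{CC 2w}.
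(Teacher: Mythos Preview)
Your argument is correct, but the paper takes a shorter route. After obtaining the same clopen \(V\supseteq a\) from Proposition~\ref{clopen shrink} and the same cylinder \(\mathcal{O}=N_t\subseteq V\) disjoint from \(a\cup b\), the paper simply invokes the classical fact that every nonempty clopen subset of \(\omega^\omega\) is homeomorphic to \(\omega^\omega\); this gives a single homeomorphism \(\varphi_1:\mathcal{O}\to\omega^\omega\setminus V\), and the involution \(\gamma=\varphi_1\cup\varphi_1^{-1}\cup id_{V\setminus\mathcal{O}}\) swaps the entire complement of \(V\) into \(\mathcal{O}\) in one stroke. No decomposition of \(b'\) into cylinders and no continuity analysis across infinitely many swaps is needed, since the three pieces \(\mathcal{O}\), \(\omega^\omega\setminus V\), \(V\setminus\mathcal{O}\) are already clopen. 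Your approach is more elementary---you build the homeomorphism by hand, mirroring the Cantor-space argument and avoiding the Alexandrov--Urysohn characterization---at the price of the extra bookkeeping you carry out (which you do correctly: the key observation that \(\bigcup_i N_{t^\frown i}\) is clopen for any \(I\subseteq\omega\) is exactly what makes infinitely many swaps harmless in \(\omega^\omega\)). One small redundancy: your second appeal to Proposition~\ref{clopen shrink} is not needed, since \(\omega^\omega\setminus V\) is already clopen and you could take \(W=\omega^\omega\setminus V\) outright.
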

\begin{proof}
    Let \(a,b\in I\) be two closed sets, and let \(U\) be an open neighborhood of \(a\). Use Proposition \ref{clopen shrink} to find a clopen set \(V\subseteq U\) such that \(a\subseteq V\). Use the fact that since \(a\cup b\) is closed and countable, the union is nowhere dense to find a clopen \(\mathcal{O}\subseteq V\) which is disjoint from \(a\cup b\). Next we use the fact that any nonempty clopen subset of \(\omega^\omega\) is homeomorphic to \(\omega^\omega\) to find a homeomorphism \(\varphi_1:\mathcal{O}\rightarrow \omega^\omega\setminus V\). Observe that the union of \(\varphi_1\) together with its inverse and the identity on \(V\setminus \mathcal{O}\) yields a homeomorphism of \(\omega^\omega\) which fixes \(a\) pointwise and maps \(b\) into \(V\).
\end{proof}

\begin{comment}    For the case of Baire space, the argument is similar. The main difference is that we cannot assume \(\mathcal{C}\) is finite, and hence there are infinitely many \(\varphi_i\) - there may be a concern that the union of all the \(\varphi_i\) does not yield a continuous function. However, because the \(\varphi_i\) are constructed so as to have pairwise disjoint domains and ranges, taking the union is well defined.
%\end{proof}
\end{comment}

In \cite{dideals2}, it is shown that the ideal generated by countable compact sets in \(\mathbb{R}\) is tight. This result is true when generalized to \(\mathbb{R}^n\), but requires a different proof. To build up to the proof, we first consider the following propositions:

    \begin{prop}\label{shrink}
        Let \(A,B\subseteq \mathbb{R}^n\) be countable compact, and let \(U\subseteq X\) be a metric open ball containing \(A\). Then there is a metric open ball with rational radius \(V\) with the same center as \(U\) such that \(A\subseteq V\subseteq \overline{V}\subseteq U\), and such that the boundary of \(V\) is disjoint from \(B\).
    \end{prop}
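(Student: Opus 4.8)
The plan is to take $V$ to be a ball concentric with $U$ whose rational radius is chosen to thread between three constraints. Write $U=Ball(c,r)$, so that the task reduces to producing a rational $\rho$ for which $V=Ball(c,\rho)$ works. The condition $A\subseteq V$ will force $\rho$ to exceed the largest distance from $c$ to a point of $A$; the condition $\overline{V}\subseteq U$ will force $\rho<r$; and the condition $bd(V)\cap B=\emptyset$ will force $\rho$ to avoid the distances from $c$ that are actually realized by points of $B$. The whole proof is then a matter of checking that an admissible $\rho$ exists.

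First I would dispatch the two easy constraints. Since $A$ is compact and contained in the open ball $U$, the continuous function $x\mapsto d(x,c)$ attains a maximum value $r_A$ on $A$, and $r_A<r$ because its maximizer lies inside the open ball $U$. Any rational $\rho$ with $r_A<\rho<r$ then gives $A\subseteq Ball(c,\rho)=V$, and also $\overline{V}=\{x:d(x,c)\le\rho\}\subseteq U$, using that in $\mathbb{R}^n$ the closure of an open ball is the corresponding closed ball. So after this step the problem is reduced to finding a rational in $(r_A,r)$ that also avoids $B$'s distance set.

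The hard part, and the step where the hypothesis that $B$ is \emph{compact} (not merely countable) is essential, is arranging $bd(V)\cap B=\emptyset$. I would consider the radius set $R_B=\{d(x,c):x\in B\}$, the image of $B$ under the distance-to-$c$ map. Because this map is continuous and $B$ is compact, $R_B$ is a compact, hence closed, subset of $\mathbb{R}$; and because $B$ is countable, $R_B$ is countable. A closed countable subset of $\mathbb{R}$ contains no nondegenerate interval (intervals are uncountable), so $R_B$ is nowhere dense. Therefore $(r_A,r)\setminus R_B$ is a nonempty open subset of $\mathbb{R}$, and so contains a rational number $\rho$. For this choice, $bd(V)=\{x:d(x,c)=\rho\}$ meets $B$ only if $\rho\in R_B$, which it does not; hence $bd(V)\cap B=\emptyset$ and all three conditions hold simultaneously.

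The main obstacle is exactly the danger that $B$'s distances from $c$ might occupy every admissible rational radius, and compactness of $B$ is precisely what rules this out: it upgrades the countable set $R_B$ to a closed, hence nowhere dense, set whose complement is dense and open and therefore meets $(r_A,r)$ in a rational. I would remark that compactness cannot be dropped here — if $B$ were only countable, for instance the set of points at all rational distances along a ray from $c$, then $R_B$ could contain every rational in $(r_A,r)$ and the argument would break down.
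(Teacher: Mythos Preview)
Your proof is correct and follows essentially the same argument as the paper's: both consider the distance-to-center map, observe that its image on \(B\) is countable compact and hence nowhere dense in \(\mathbb{R}\), and then pick a rational radius in the interval \((r_A,r)\) (the paper writes \((\delta,r)\)) avoiding that image. The only differences are cosmetic, plus your closing remark on the necessity of compactness, which is a nice addition but not part of the paper's proof.
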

    \begin{proof}
        Let \(U=Ball(x,r)\), and define a continuous function \(f:B\rightarrow\mathbb{R}\) by \(f(y)=d(x,y)\). Since \(B\) is countable compact, so is the image of \(f\), hence \(f(B)\) is nowhere dense. Let \(\delta=\inf\{\varepsilon: A\subseteq Ball(x,\varepsilon)\}\), and note that \(\{s\in\mathbb{Q}:\delta<s<r \text{ and } s\not\in f(B)\}\) is nonempty. Fix some \(s\) from this set, and note that \(V=Ball(x,s)\) works.
    \end{proof}
    \begin{prop}\label{disjointify}
        Let \(a\subseteq \mathbb{R}^n\) be countable compact, and let \(\varepsilon>0\). There is a finite cover \(\mathcal{C}\) of \(a\) consisting of pairwise disjoint metric open balls of radius less than \(\varepsilon\). In particular, \(\bigcup \mathcal{C}\subseteq Ball(a,\varepsilon)\).
    \end{prop}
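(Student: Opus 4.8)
The plan is to induct on the Cantor--Bendixson rank of $a$, peeling off the finitely many points of top rank inside small balls and applying the inductive hypothesis to what remains. Since $a$ is countable and compact, its derived sequence terminates: there is a least ordinal $\alpha$ with $a^{(\alpha)}=\emptyset$, and because a decreasing intersection of nonempty compacta is nonempty, $\alpha$ must be a successor $\beta+1$ with $a^{(\beta)}$ a nonempty compact set having no limit points, i.e.\ a finite set $\{z_1,\dots,z_k\}$. I would run strong induction on $\alpha$, with the finite case ($a=a^{(0)}$ finite, so $\alpha=1$) as the base: finitely many distinct points have positive pairwise separation, so tiny pairwise disjoint balls of radius $<\varepsilon$ suffice.

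For the inductive step, I would first choose pairwise disjoint balls $Ball(z_j,\eta)$ of radius $\eta<\min\bigl(\varepsilon,\tfrac12\min_{i\neq j}d(z_i,z_j)\bigr)$. The crucial refinement is to invoke Proposition \ref{shrink} (with $A=\{z_j\}$ and $B=a$) to replace each by a concentric ball $V_j$ of rational radius with $z_j\in V_j\subseteq\overline{V_j}\subseteq Ball(z_j,\eta)$ and $\partial V_j\cap a=\emptyset$; shrinking preserves both radius $<\varepsilon$ and pairwise disjointness. Setting $a'=a\setminus\bigcup_j V_j$, the boundary-avoidance yields $a'\cap\overline{V_j}=a\cap\partial V_j=\emptyset$, so $a'$ is a countable compact set disjoint from each closed ball $\overline{V_j}$; and since $a'\subseteq a$ contains none of the top-rank points, one checks $(a')^{(\beta)}\subseteq a^{(\beta)}\cap a'=\emptyset$, so the rank of $a'$ is strictly below $\alpha$.

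The main obstacle is that a bare application of the inductive hypothesis to $a'$ would produce disjoint balls covering $a'$ but might not keep them clear of the balls $V_j$ already placed around the $z_j$. I would resolve this by exploiting the positive distance $\delta':=d\bigl(a',\bigcup_j\overline{V_j}\bigr)>0$ (two disjoint compacta): apply the inductive hypothesis to $a'$ with radius bound $\min(\varepsilon,\delta'/2)$ in place of $\varepsilon$. Then, discarding any ball that misses $a'$, every ball in the resulting cover contains a point of $a'$, so its center lies within $\delta'/2$ of $a'$ and, its radius being $<\delta'/2$, it cannot reach $\bigcup_j\overline{V_j}$; hence it is disjoint from every $V_j$. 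Finally $a=(a\cap\bigcup_j V_j)\cup a'$ is covered by the $V_j$ together with the inductively produced balls, and all of these balls are pairwise disjoint, completing the step. I expect the only genuinely delicate points to be this separation argument and the preliminary use of Proposition \ref{shrink} to make $a'$ closed and bounded away from the $V_j$; the rank bookkeeping and the finite base case are routine.
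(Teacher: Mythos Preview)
Your proof is correct and follows essentially the same approach as the paper's: induction on Cantor--Bendixson rank, using Proposition~\ref{shrink} to arrange that the balls around the top-rank points have boundaries disjoint from $a$, then invoking the positive distance between the leftover set and those balls to keep the inductively-produced cover separated. Your presentation is in fact slightly tighter than the paper's in two places: you observe immediately that the top derivative $a^{(\beta)}$ is finite (compact and discrete), and your use of $\delta'/2$ together with the ``each ball meets $a'$, hence its center is within $\delta'/2$ of $a'$'' argument cleanly justifies disjointness from the $V_j$, whereas the paper applies the hypothesis with bound $\delta$ and simply asserts the resulting balls are disjoint from the $B_i$.
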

    \begin{proof}
        We proceed by induction on the Cantor-Bendixson rank of \(a\). Let \(\alpha\) be the least ordinal such that \(a^{(\alpha)}\neq \emptyset\) and \(a^{(\alpha+)}=\emptyset\). Then \(a^{(\alpha)}\) contains no accumulation points; we write \(a^{(\alpha)}=\{x_i:i\in\omega\}\), and we recursively construct balls \(B_i=Ball(x_i,\varepsilon_i)\) with \(\varepsilon_i< \varepsilon\) such that \begin{enumerate}            \item for \(j<i\), \(B_i\cap B_j =\emptyset\)
            \item for all \(x\in a\) and all \(i\in \omega\), \(x\not\in bd(B_i)\)
            \item If \(x_i\in B_j\) for some \(j<i\), then \(B_i=\emptyset\), and otherwise \(B_i\ni x_i\).
        \end{enumerate}
        We note that property 2 can be ensured by the proposition above, and properties 1 and 3 can be ensured since \(a^{(\alpha)}\) has no accumulation points. Further, since \(a^{\alpha}\) is compact, we note that the process will yield finitely many sets \(B_i\) - let \(k\) denote the index of the last nonempty set constructed. In the end, we let \(a^*=a\setminus \bigcup_{i\leq k}B_i\), and we note \(a^*\) is closed of rank \(\leq \alpha\). We let \(\delta=\inf\{d(x,y):x\in a^*, y\in\bigcup_{i\leq k}B_i\}\) - we note \(\delta>0\) since \(B_i\) were chosen to not intersect \(a\) in its boundary. Let \(\delta'=\min\{\delta,\varepsilon\}\), and by induction, we can cover \(a^*\) with pairwise disjoint open balls of radius smaller than \(\delta\) and which are also disjoint from the \(B_i\). Taking the cover of \(a^*\) together with the \(B_i\) gives a cover of \(a\) as desired.
    \end{proof}
    \begin{prop}
        Let \(a\subseteq \mathbb{R}^n\) be countable compact, and let \(\mathcal{C}\) be a cover of \(a\). There exists a refinement of \(\mathcal{C}\) into a cover of \(a\) by pairwise disjoint metric balls with rational radius.
    \end{prop}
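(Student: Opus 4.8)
The plan is to combine compactness with a Lebesgue-number argument to reduce the given cover to one with uniformly controlled radii, apply Proposition~\ref{disjointify} to disjointify, and finally shrink each ball slightly to obtain rational radii. Throughout I assume \(\mathcal{C}\) is an open cover, as is implicit in the surrounding results. First I would extract a finite subcover \(C_1,\dots,C_m\in\mathcal{C}\) of the compact set \(a\). To control how the refining balls sit inside this subcover, I would invoke a Lebesgue-number argument: the function \(g(x)=\max_{1\le j\le m} d\big(x,\mathbb{R}^n\setminus C_j\big)\) is continuous and strictly positive on \(a\), so it attains a positive minimum \(\delta>0\); consequently, for every \(x\in a\) the ball \(Ball(x,\delta)\) is contained in some single \(C_j\).

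With \(\delta\) in hand, I would apply Proposition~\ref{disjointify} with \(\varepsilon=\delta\) to produce a finite cover \(B_1,\dots,B_k\) of \(a\) by pairwise disjoint open balls of radius less than \(\delta\). The key observation is that the construction in Proposition~\ref{disjointify} centers each \(B_i\) at a point \(x_i\in a\); since \(B_i=Ball(x_i,\varepsilon_i)\) with \(\varepsilon_i<\delta\), we have \(B_i\subseteq Ball(x_i,\delta)\subseteq C_{j(i)}\) for an appropriate index \(j(i)\). Thus \(\{B_1,\dots,B_k\}\) is at once pairwise disjoint, a cover of \(a\), and a refinement of \(\mathcal{C}\).

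It remains to replace each radius by a rational one without destroying these properties. Here I would use that, because the \(B_i\) are disjoint and cover \(a\), each piece \(a_i:=a\cap B_i\) is relatively clopen in \(a\) (it is open as an intersection with \(B_i\) and closed as the complement in \(a\) of the union of the other open pieces), hence compact. The continuous function \(y\mapsto d(x_i,y)\) therefore attains a maximum \(\rho_i<\varepsilon_i\) on \(a_i\), so I may choose a rational \(q_i\) with \(\rho_i<q_i<\varepsilon_i\). Shrinking each ball to \(Ball(x_i,q_i)\) keeps the balls pairwise disjoint and inside their respective \(C_{j(i)}\), while \(a_i\subseteq Ball(x_i,q_i)\) guarantees they still cover \(a\). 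The main obstacle is not any single step but getting the refinement and disjointness to hold simultaneously: Proposition~\ref{disjointify} delivers disjointness but says nothing about membership in \(\mathcal{C}\), and it is precisely the uniform radius bound \(\delta\) from the Lebesgue-number step---together with the fact that the balls are centered on \(a\)---that forces each ball into a single member of the cover. The rational-radius adjustment is then a routine shrinking argument.
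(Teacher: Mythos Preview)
Your argument is correct. Both your proof and the paper's reduce the problem to a single application of Proposition~\ref{disjointify} with a sufficiently small radius, and both tacitly rely on the fact (visible in the proof of Proposition~\ref{disjointify}, not its statement) that the disjoint balls produced there are centred at points of \(a\). The differences are in how that radius is obtained and how the rational-radius requirement is handled. The paper first invokes Proposition~\ref{shrink} to pass to a finite refinement \(\mathcal{C}_1\) of balls whose boundaries avoid \(a\), and then takes \(\varepsilon\) to be the distance from \(a\) to those boundaries; you bypass Proposition~\ref{shrink} entirely and get the same effect with a Lebesgue-number argument on a finite subcover. Your route is marginally more self-contained, while the paper's keeps everything phrased in terms of the two auxiliary propositions it has just set up. As for rational radii, the paper leaves this implicit (Proposition~\ref{shrink}, which feeds into the construction of Proposition~\ref{disjointify}, already produces rational radii), whereas you add an explicit shrinking step at the end; your compactness argument for \(a_i=a\cap B_i\) via clopenness is clean and makes the rational adjustment transparent.
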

    \begin{proof}
        Let \(a\subseteq X\) be countable compact, and let \(\mathcal{C}\) be a cover of \(a\). Use Proposition \ref{shrink} to replace \(\mathcal{C}\) with a refinement \(\mathcal{C}_1\) consisting of balls whose boundary do not intersect \(a\). Let \(\varepsilon=\min\{d(x,y):x\in a, y\in bd(C)\text{ for }C\in\mathcal{C}_1\}\). Now use Proposition \ref{disjointify} to construct a cover \(\mathcal{C}_2\) of \(a\) by pairwise disjoint balls of radius smaller than \(\varepsilon\); the choice of \(\varepsilon\) ensures that \(\mathcal{C}_2\) is a refinement of \(\mathcal{C}_1\).
    \end{proof}
    \begin{prop}\label{cylinder}
        Let \(\{C_1,\dots, C_m\}\) be a collection of pairwise disjoint closed metric balls in \(\mathbb{R}^n\). There is a set \(K\) which is the image of the closed unit ball under some homeomorphism of \(\mathbb{R}^n\) and satisfies \(C_1,C_2\subseteq K\) and for \(3\leq i\leq m\), \(C_i\cap K=\emptyset\).
    \end{prop}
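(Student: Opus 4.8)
The plan is to fuse $C_1$ and $C_2$ into a single flatly embedded $n$-cell by joining them with a thin solid tube routed around the remaining balls, which is what the name suggests. First I would produce the core of this tube. Writing $C_i=\overline{Ball(z_i,r_i)}$ and working in dimension $n\geq 2$ (this is the one place where dimension genuinely enters: the complement of finitely many pairwise disjoint closed balls in $\mathbb{R}^n$ is path connected precisely when $n\geq 2$), I note that $\mathbb{R}^n\setminus(C_3\cup\dots\cup C_m)$ is open and path connected. Fixing boundary points $p_1\in bd(C_1)$ and $p_2\in bd(C_2)$, I would join $p_1$ to $p_2$ by a simple polygonal arc $\alpha$ whose open part avoids $C_1\cup\dots\cup C_m$ entirely and which meets $C_1$ only at $p_1$ and $C_2$ only at $p_2$. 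Starting from any path supplied by path connectivity, one perturbs it into general position and pushes it slightly off the balls to achieve this.

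Next I would thicken $\alpha$ into a tube. Since the compact arc $\alpha$ is disjoint from the closed set $C_3\cup\dots\cup C_m$ and meets $C_1\cup C_2$ only at its endpoints, there is $\varepsilon>0$ small enough that a suitable $\varepsilon$-neighborhood $T$ of $\alpha$ is disjoint from $C_3,\dots,C_m$, is a solid tube homeomorphic to $D^{n-1}\times[0,1]$, and (after a small adjustment at the two ends, which is the routine-but-fiddly part) meets each of $C_1$ and $C_2$ in an $(n-1)$-disk lying on $bd(C_1)$ and $bd(C_2)$, with $int(T)$ otherwise disjoint from $C_1\cup C_2$. Setting $K=C_1\cup T\cup C_2$, the resulting dumbbell is homeomorphic to $D^n$: gluing the solid cylinder $T$ to the ball $C_1$ along a disk in $bd(C_1)$ yields a ball, and then gluing $C_2$ to the free end of the cylinder along a disk in $bd(C_2)$ yields a ball again.

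The remaining and genuinely delicate step is to upgrade ``$K$ is homeomorphic to $D^n$'' to ``$K$ is the image of $\overline{D^n}$ under a self-homeomorphism of $\mathbb{R}^n$.'' For this I would check that $K$ is \emph{flatly} embedded, i.e.\ that $bd(K)$ is a locally flat $(n-1)$-sphere. Each constituent of $bd(K)$ --- the uncapped portions of $bd(C_1)$ and $bd(C_2)$ together with the lateral boundary of $T$ --- is locally flat, and by rounding the two $(n-2)$-spheres along which the cylinder meets the balls one arranges that $bd(K)$ is globally a locally flat sphere. The generalized Schoenflies theorem (Brown--Mazur) then produces a self-homeomorphism $g$ of $S^n=\mathbb{R}^n\cup\{\infty\}$, which we may take to fix $\infty$, carrying $bd(K)$ to a standard sphere and hence $K$ to a round closed ball $B\subseteq\mathbb{R}^n$. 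Composing $g^{-1}$ with the affine homeomorphism of $\mathbb{R}^n$ taking $\overline{D^n}$ onto $B$ gives a self-homeomorphism of $\mathbb{R}^n$ carrying the closed unit ball exactly onto $K$, which is the desired conclusion.

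I expect the main obstacle to be this tameness verification rather than the cell structure: abstractly $K$ is plainly a ball, but for $n\geq 3$ wild embeddings such as the Alexander horned sphere show that an abstract $n$-cell in $\mathbb{R}^n$ need not be ambiently standard, so the explicit construction of $T$ and the rounding of its attaching regions --- whose sole purpose is to guarantee local flatness of $bd(K)$ --- cannot be omitted. An alternative that avoids citing Schoenflies is to build the ambient homeomorphism by hand as a compactly supported isotopy, supported in a neighborhood of $T\cup C_2$ disjoint from the other balls, that grows a finger out of $C_1$ along the tube and absorbs $C_2$; this is transparent in low dimensions but in general reduces to the same local-flatness bookkeeping.
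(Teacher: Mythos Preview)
Your proposal is correct and follows the same core idea as the paper: form a dumbbell $K=C_1\cup T\cup C_2$ by joining $C_1$ and $C_2$ with a thin tube routed around the remaining balls. The one technical divergence is in how ambient standardness of $K$ is secured. The paper takes the arc $L$ to be the straight segment between the two centers, rerouted along the boundary spheres of slightly enlarged balls $D_i\supseteq C_i$ for $i\geq 3$; it then observes that $L$ is unknotted, straightens it by an ambient homeomorphism $\psi$ of $\mathbb{R}^n$, and defines the tube as the $\psi$-preimage of a standard round cylinder about the first coordinate axis. This builds flatness of the tube into the construction and avoids citing Schoenflies. Your route---checking local flatness of $bd(K)$ directly (with the rounding at the attaching $(n-2)$-spheres) and then applying the generalized Schoenflies theorem---reaches the same conclusion and is in fact more explicit about why the union $K$ is ambiently a ball, a point the paper leaves implicit after simply setting $K=C_1\cup cyl_\rho L\cup C_2$. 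Your remark that $n\geq 2$ is genuinely needed is also a worthwhile observation the paper omits.
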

    \begin{proof}
        Let \(L\) be a straight line path from the center of \(C_1\) to the center of \(C_2\). Since the balls are all closed and pairwise disjoint, we can find larger balls \(D_3\supseteq C_3, D_4\supseteq C_4,\dots, D_m\supseteq C_m\) such that \(C_1,c_2,D_3,\dots, D_m\) are all pairwise disjoint. If \(L\) passes through \(D_i\), replace \(L\) by a curve that it traces the geodesic along the boundary of \(D_i\). By this construction, \(L\) does not have a knot, so there is a homeomorphism \(\psi\) of \(\mathbb{R}^n\) that sends \(L\) to the first axis; i.e. \(\psi(L)\subseteq \{(x,0,\dots,0):x\in\mathbb{R}\). Construct a family of cylinders about \(L\) as follows: \(cyl_\rho L:=\psi^{-1}(\{(x,r_2,\dots,r_{n}):\sqrt{r_2^2+\dots+r_n^2}\leq \rho, (x,0,\dots, 0)\in \psi(L)\})\). Since there are finitely many \(C_m\), we can choose \(\rho\) such that \(cyl_\rho L\) does not intersect any of the \(C_3,\dots,C_m\). In the end, let \(K=C_1\cup cyl_\rho L\cup C_2\). 
    \end{proof}
    %\begin{prop}\label{annulusLemma}
    %    Let \(A,B\) be metric open balls whose closure is contained in the interior of the annulus \(S^{n-1}\times [0,1]\), where the metric is inherited from embedding the annulus in \(\mathbb{R}^n\). There exists a homeomorphism of the annulus \(h\) such that \(h(A)\subseteq B\) and \(h\restriction S^n\times \{0,1\}=id\).
    %\end{prop}
    %\begin{proof}
     %   Let \(A=Ball(x,r)\) be the ball of radius \(r\) centered at \(x\) and \(B=Ball(y,s)\) be the ball of radius \(s\) centered at \(y\). We construct the homeomorphism of the annulus in three steps: \(h_1\) will be such that \(h_1 (A)\) has radius at most \(s\), \(h_2\) will be such that \(h_2 h_1 (x)\) has last coordinate equal to the last coordinate of \(y\), and finally \(h_3\) will be such that \(h_3 h_2 h_1 (x)=y\). If \(r\leq s\) already, let \(h_1\) be the identity. Otherwise, let \(t>r\) be such that \(cl(Ball(x,t))\) is contained in the interior of the annulus. Now let \(h_1\) be a contraction of this ball fixing the boundary such that \(h_1(A)\) is a ball of radius \(s\). To get \(h_2\), let \(x=(x_1,\dots,x_n,k)\) and \(y=(y_1,\dots,y_n,\ell)\). Note that there is a homeomorphism \(f\) of \([0,1]\) such that \(f(k)=\ell\). Let \(h_2\) be the homeomorphism of the annulus that fixes the first \(n\) coordinates and maps the last coordinate according to \(f\). To get \(h_3\), rotate the interior of the annulus accordingly.
    %\end{proof}
We are now finally ready to prove that \(\mathbb{R}^n\) with the ideal generated by countable compact sets is tight.
\begin{theorem}\label{CC Rn}
    Euclidean space \(\mathbb{R}^n\) with the usual topology and the ideal generated by compact countable sets is tight. Moreover, tightness can be witnessed by a homeomorphism with compact support.
\end{theorem}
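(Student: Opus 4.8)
The plan is to verify tightness directly, with no enlargement of $a$ needed (so $a' = a$): given two countable compact sets $a,b$ and an open ball $U \supseteq a$, I will produce a compactly supported homeomorphism fixing $a$ pointwise that pushes all of $b$ into $U$. First I would use Proposition \ref{shrink} to replace $U$ by a concentric ball: choose $V$ with $a \subseteq V \subseteq \overline{V} \subseteq U$ whose boundary misses $b$, so that $b$ splits into the part $b_{\text{in}} = b \cap V$ already inside $U$ and the part $b_{\text{out}} = b \setminus \overline{V}$ that must be transported inward. Only $b_{\text{out}}$ needs to move.

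Next I would lay out the combinatorial data. Using Proposition \ref{disjointify}, cover $b_{\text{out}}$ by finitely many pairwise disjoint closed balls $C_1,\dots,C_k$ of radius small enough to stay clear of $\overline{V}$. Because $a \cup b$ is countable compact, hence nowhere dense, the set $V \setminus (a \cup b)$ is open dense, so I can select pairwise disjoint closed balls $W_1,\dots,W_k \subseteq V$, each disjoint from $a \cup b$; these are the deposit sites, all inside $U$. Finally cover $a$ and $b_{\text{in}}$ by finitely many pairwise disjoint closed balls small enough to avoid every $W_l$ (possible since $a \cup b$ is disjoint from each $W_l$). I call the whole finite collection — the covers of $a$ and of $b_{\text{in}}$, the sources $C_j$, and the targets $W_l$ — the \emph{obstacle family}.

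Then I would push the sources in one at a time. At stage $i$, apply Proposition \ref{cylinder} to the pair $(C_i, W_i)$ with all the remaining obstacle balls as the sets to be avoided, obtaining a set $K_i$ homeomorphic to the closed ball with $C_i \cup W_i \subseteq K_i$ and $K_i$ disjoint from every other obstacle ball. Transporting through a homeomorphism $K_i \cong B^n$, I would take $\gamma_i$ to be a homeomorphism of $\mathbb{R}^n$ supported in $\operatorname{int}(K_i)$ (identity on $\partial K_i$ and outside) that compresses $C_i$ and slides it into the interior of $W_i$; this is possible because any compact subset of the interior of a ball can be pushed into any prescribed open sub-ball by a homeomorphism fixing the boundary. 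The composite $\gamma = \gamma_k \circ \cdots \circ \gamma_1$ is the desired homeomorphism, compactly supported inside $\bigcup_i K_i$.

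The bookkeeping point that makes overlaps among the tubes $K_i$ harmless is that every ball other than the current source–target pair lies in the obstacle family and is therefore disjoint from $K_i$: since $a$ and $b_{\text{in}}$ are obstacles at every stage, $\gamma$ fixes $a$ pointwise and never moves $b_{\text{in}}$; since each $W_l$ with $l \neq i$ is an obstacle for $K_i$, material already deposited in a target is untouched by later pushes; and since each $C_j$ with $j \neq i$ is an obstacle for $K_i$, the portion of $b$ lying in $C_j$ waits untouched until its own stage. As $b \cap K_i \subseteq C_i$ (all other $b$-material sits in protected balls avoided by $K_i$, and $W_i$ is free), stage $i$ moves exactly $b \cap C_i$ into $W_i \subseteq V \subseteq U$, giving $\gamma b = b_{\text{in}} \cup \bigcup_i \gamma_i(b \cap C_i) \subseteq V \subseteq U$. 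The main obstacle is the tube construction itself — routing an unknotted $n$-cell from $C_i$ to $W_i$ past finitely many obstacle balls — but that is precisely the content of Proposition \ref{cylinder}, so once it is in hand the remainder is the disjointness bookkeeping described above.
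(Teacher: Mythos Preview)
Your argument is essentially the paper's: both proofs use Proposition~\ref{cylinder} repeatedly to route the finitely many ``source'' balls covering the outer part of $b$ into ``target'' balls near $a$, while keeping all other balls (covering $a$, the inner part of $b$, and the remaining sources/targets) as obstacles so that each push disturbs nothing else. The only organizational difference is that you pre-select all the disjoint targets $W_1,\dots,W_k$ at the outset and iterate, whereas the paper runs an induction on the number of source balls, choosing a fresh target $D_1^*$ inside one of the small balls covering $a$ at each step and absorbing the just-moved points into an enlarged $a'$ before recursing; neither variant buys anything the other lacks.

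One small correction to your setup: tightness asks for every open neighbourhood of $a$ (equivalently every $\varepsilon$-neighbourhood $Ball(a,\varepsilon)$), not for every \emph{single} metric ball $U\supseteq a$, and Proposition~\ref{shrink} as stated only applies to a single ball. This is harmless for your argument, since the targets $W_l$ only need to sit inside $U$ and off $a\cup b$; just replace your ``concentric $V$'' step by covering $a$ with finitely many small pairwise disjoint balls inside $Ball(a,\varepsilon)$ (via Proposition~\ref{disjointify}) and place the $W_l$ inside those---exactly what the paper does. You should also make explicit that the balls covering $a\cup b_{\text{in}}$ are chosen small enough to miss the $C_j$'s as well as the $W_l$'s, so that the whole obstacle family is genuinely pairwise disjoint and Proposition~\ref{cylinder} applies at every stage.
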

\begin{proof}
     Let \(a,b\) be countable compact, and let \(\varepsilon>0\). Find a finite cover of \(a\), denoted by \(\mathcal{C}=\{C_1,\dots,C_m\}\) of pairwise disjoint basic open balls of radius at most \(\varepsilon/2\) whose boundaries are disjoint from \(b\). Here we note that if \(x\in b\cap\bigcup\mathcal{C}\), then \(x\) is already close enough to \(a\) to satisfy the tightness condition. Hence we let \(a'=a\cup (b\cap\bigcup\mathcal{C})\) and \(b'=b\setminus a'\) and observe that \(a'\) and \(b'\) are still compact and countable. Now choose \(\delta\) small enough such that \(Ball(b',\delta)\) is disjoint from \(\bigcup\mathcal{C}\), and find a cover \(\mathcal{D}=\{D_1,\dots,D_\ell\}\) of \(b'\) by pairwise disjoint balls of radius smaller than \(\delta\). 
    
    We use induction on \(\ell\) to constuct a homeomorphism \(\gamma\) of \(\mathbb{R}^n\), such that \(\gamma(b')\subseteq \bigcup\mathcal{C}\) and \(\gamma\in\pstab a'\). The base case \(\ell=0\) is trivial. For general \(\ell\), we start by finding \(\gamma_1\) such that \(\gamma_1(b'\cap D_1)\subseteq \bigcup \mathcal{C}\) and \(\gamma_1\) fixes all other points of \(a'\) and \(b'\). To do this, first use Proposition \ref{shrink} to find a ball \(C_1'\subseteq \overline{C_1'}\subseteq C_1\) which contains \(a'\cap C_1\). Let \(D_1^*\subseteq C_1\setminus C_1'\)  be a ball whose boundary is disjoint from the boundary of \(C_1\setminus C_1'\); \(D_1^*\) is the target where elements of \(b'\cap D_1\) will be sent. Use Proposition \ref{cylinder} to find a set \(K\) which contains \(D_1,D_1^*\), and is disjoint from \(C_1',C_2,\dots, C_m, D_2,\dots, D_\ell.\) Let \(f:K\rightarrow K\) be a homeomorphism of \(K\) which fixes the boundary and is such that \(f(b\cap D_1)\subseteq D_1^*.\) Let \(\varphi_1:\mathbb{R}^n\rightarrow\mathbb{R}^n\) be the homeomorphism obtained by extending \(f\) with the identity outside of \(K\), observing that \(\varphi_1\) fixes \(a'\) pointwise and moves \(b'\cap D_1\) within \(\varepsilon\) of \(a\). 
    
    Since \(a'\cup \varphi_1(b'\cap D_1)\) is still countable compact, we apply the induction hypothesis to obtain a \(\varphi_2\) which fixes \(a'\cup \varphi_1(b'\cap D_1)\) pointwise and moves \(b'\setminus D_1\) into \(\bigcup\mathcal{C}\). Taking the composition of the two homeomorphisms, we see that \(\varphi=\varphi_2\varphi_1\) is the desired homeomorphism to witness that the space is tight. Further, we observe that \(\varphi\) is a homeomorphism with compact support, so the result holds even if the acting group is restricted to only homeomorphisms with compact support.
    \end{proof}

\begin{theorem}
    Euclidean space \(\mathbb{R}^n\) with the usual topology and the ideal generated by closed countable sets is tight.
\end{theorem}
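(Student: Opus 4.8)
The plan is to reduce the non-compact case to the already-established compact case of Theorem \ref{CC Rn} by a localization-and-gluing argument. First I would replace \(a\) and \(b\) by their closures, so that \(a,b\) are closed and countable (the closures are still countable, hence in \(I\)). The essential new feature is that \(a\) and \(b\) may be unbounded. Since every self-homeomorphism of \(\mathbb{R}^n\) is proper, it extends to a homeomorphism of the one-point compactification fixing \(\infty\), and therefore carries unbounded closed sets to unbounded sets; in particular no homeomorphism can push an unbounded \(b\) into a bounded neighborhood. This forces the enlargement \(a'\) to be unbounded. Accordingly I would set \(a'=a\cup L\), where \(L=\mathbb{Z}^n+(\tfrac12,\dots,\tfrac12)\) is a fixed unbounded, closed, discrete lattice; then \(a'\) is again closed and countable, depends only on \(a\), and its neighborhoods reach out to infinity so that an unbounded \(b\) can be accommodated.

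Given \(b\in I\) and an open neighborhood \(U\) of \(a'\), I would tile \(\mathbb{R}^n\) by axis-parallel cubes of side \(\tfrac32\) with a translation offset chosen generically so that no point of the countable set \(a'\cup b\) lies on any face of the tiling; since the set of bad offsets is a countable union of hyperplanes, such an offset exists. This arranges that every point of \(a'\cup b\) lies in the interior of a unique cube, and, because any interval of length greater than \(1\) meets \(\mathbb{Z}+\tfrac12\), every cube of the tiling contains at least one point of \(L\subseteq a'\). Thus for each cube \(Q\) the sets \(a'\cap Q\) and \(b\cap Q\) are compact, countable, contained in \(int(Q)\), and lie at positive distance from \(bd(Q)\), while \(a'\cap Q\neq\emptyset\) always supplies a target toward which \(b\cap Q\) can be pushed.

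On each cube I would apply Theorem \ref{CC Rn} to the pair \((a'\cap Q,\,b\cap Q)\), with a tolerance \(\varepsilon_Q>0\) small enough that the \(\varepsilon_Q\)-neighborhood of \(a'\cap Q\) stays inside both \(Q\) and \(U\). The point I would need to extract from the proof of Theorem \ref{CC Rn} is that its witnessing homeomorphism is the identity outside a union of the covering balls and the connecting sets \(K\) built in Proposition \ref{cylinder}; when all of the relevant points lie in the \emph{convex} set \(Q\), the straight segments joining ball centers and the thin cylinders around them can be kept inside \(Q\), so the homeomorphism is supported in \(int(Q)\) and is the identity near \(bd(Q)\). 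This yields \(\gamma_Q\) supported in \(int(Q)\), fixing \(a'\cap Q\) pointwise, with \(\gamma_Q(b\cap Q)\subseteq U\).

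Finally I would glue: since the cubes tile \(\mathbb{R}^n\) with pairwise disjoint interiors and each \(\gamma_Q\) is the identity in a collar of \(bd(Q)\), the map \(\gamma\) agreeing with \(\gamma_Q\) on each \(Q\) and with the identity on the faces is a well-defined homeomorphism of \(\mathbb{R}^n\); its continuity follows from the local finiteness of the tiling together with the identity collars, exactly as in the remark on unions of maps with disjoint supports for \(\omega^\omega\). Because no point of \(a'\cup b\) lies on a face, this gives \(\gamma\in\pstab{a'}\) and \(\gamma(b)=\bigcup_Q\gamma_Q(b\cap Q)\subseteq U\), which is tightness. I expect the main obstacle to be precisely the confinement step of the previous paragraph—arranging that the compact-case homeomorphism can be realized with support inside a single cell and the identity on its boundary—together with the conceptual point that the enlargement \(a'\) must be taken unbounded; the choice of convex cells is what makes the confinement a routine inspection of the existing construction rather than a genuinely new argument.
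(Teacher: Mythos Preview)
Your proposal is correct and follows essentially the same strategy as the paper: enlarge \(a\) by the half-integer lattice \(L=(\mathbb{Z}+\tfrac12)^n\), cut \(\mathbb{R}^n\) into axis-parallel cells whose faces miss \(a'\cup b\) and whose interiors each meet \(L\), apply the compact case (Theorem \ref{CC Rn}) inside each cell with support away from its boundary, and glue. The only cosmetic difference is that the paper perturbs the integer hyperplanes \(x_i=m\) individually (within \(\pm 10^{-1}\)) rather than translating a fixed side-\(\tfrac32\) grid, and it invokes the ``compact support'' clause of Theorem \ref{CC Rn} directly rather than arguing convexity; both routes achieve the same confinement.
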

\begin{proof}
    Let \(a\) be given countable closed, and let \(a'\) extend \(a\) by adding in every point where each coordinate has a fractional part of \(1/2\). Let \(b\) be a given countable closed set, and \(\varepsilon>0\). Now consider hyperplanes of the form \(x_i=m\), where \(1\leq i\leq n\) and \(m\in \mathbb{Z}\). Some of these hyperplanes may intersect \(a'\cup b\), but because \(a'\cup b\) is countable, there exists some real number \(r_{i,m}\in(m-10^{-1},m+10^{-1})\) such that \(x_i=r_{i,m}\) is disjoint from \(a'\cup b\). Note that the collection of all such hyperplanes divide the \(\mathbb{R}^n\) into regions which are homeomorphic to the cube \([0,1]^n\), and each of which has nontrivial intersection with \(a'\) (in particular, each region will have exactly one point where each coordinate has a fractional part of \(1/2\)). Enumerate these regions by \(U_j\), and let \(a_j'=a'\cap U_j, b_j=b\cap U_j\). By construction, \(a_j', b_j\) are compact in \(int(U_j)\), and so by the previous result, there is a homeomorphism \(\varphi_j\) of \(U_j\) that fixes its boundary pointwise, fixes \(a_j'\) pointwise, and brings \(b_j\) within \(\varepsilon\) of \(a_j'\). The union of all the \(\varphi_j\) is well-defined because they are all the identity on the boundaries of their respective regions, and taking the union yields a homeomorphism of the entire space \(\mathbb{R}^n\) which fixes \(a'\) pointwise and moves \(b\) within \(\varepsilon\) of \(a'\).
\end{proof}

We finish this section with examples of ideals which are not dynamically \(\sigma\)-complete. This first class of ideals differs from what has been discussed above in that we fix a metric and restrict the acting group to the group of self-isometries instead of the group of all self-homeomorphisms. It turns out that restricting the group in this way is enough to prevent the ideal from being dynamically \(\sigma\)-complete.
\begin{theorem}
    Let \((X,d)\) be an uncountable separable metric space, and let \(I\) contain all countable closed sets and exclude \(X\) itself. Then \((Iso(X)\curvearrowright X, I)\) is not dynamically \(\sigma\)-complete.
\end{theorem}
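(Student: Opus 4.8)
The plan is to exploit the one feature that separates isometries from the homeomorphisms used in the earlier tightness arguments: an isometry cannot shrink a set, and in particular a surjective isometry carries an $\varepsilon$-net to an $\varepsilon$-net. Whereas a homeomorphism can compress each $b_i$ into a vanishing neighborhood of $a'$, making the union's closure as small as $a'$, a sequence of progressively finer \emph{nets} will stay dense no matter how isometries rearrange it. Accordingly I would take $a=\emptyset$, so that $\pstab{\emptyset}=Iso(X)$ and the group elements $\gamma_i$ range over \emph{all} isometries (the most favorable situation for a would-be consolidator), and still force every union out of $I$.

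For each $i$ fix $\varepsilon_i=2^{-i}$ and let $b_i\subseteq X$ be a maximal $\varepsilon_i$-separated set, obtained by Zorn's lemma. By maximality every point of $X$ lies within $\varepsilon_i$ of $b_i$, so $b_i$ is an $\varepsilon_i$-net; being $\varepsilon_i$-separated it is uniformly discrete, hence closed, and by separability of $X$ it is countable (distinct points carry disjoint $\varepsilon_i/2$-balls, each meeting a fixed countable dense set). Thus each $b_i$ is a countable closed set and so $b_i\in I$. Now let $\gamma_i\in Iso(X)$ be arbitrary. Since the $\gamma_i$ are \emph{bijective} isometries, each $\gamma_i\cdot b_i$ is again an $\varepsilon_i$-net: given $y\in X$, choose $n\in b_i$ with $d(\gamma_i^{-1}y,n)<\varepsilon_i$, whence $d(y,\gamma_i\cdot n)=d(\gamma_i^{-1}y,n)<\varepsilon_i$. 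This net-preservation is the crucial step, and it is exactly where restricting the acting group to isometries does the work.

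It then remains to observe that $\bigcup_{i}\gamma_i\cdot b_i$ is dense in $X$: given $y\in X$ and $\delta>0$, pick $i$ with $\varepsilon_i<\delta$ and a point of $\gamma_i\cdot b_i$ within $\varepsilon_i$ of $y$. Hence $\overline{\bigcup_i\gamma_i\cdot b_i}=X$. Because membership in $I$ is determined by the closure (as $I$ is the ideal generated by the countable closed sets, $A\in I$ forces $\overline{A}\in I$), and $X$ is uncountable so that $X\notin I$, this dense union cannot lie in $I$. As this holds for \emph{every} choice of $\gamma_i\in\pstab{\emptyset}$, there is no way to consolidate $(b_i)$ into a member of $I$, and the dynamical ideal fails to be dynamically $\sigma$-complete. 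The only genuine subtlety is this final membership point, where one must use that the relevant ideal excludes dense sets (equivalently, $X\notin I$ together with the closure-invariance of $I$); the geometric heart of the argument—net-preservation under isometries—needs no space-specific input and so applies uniformly to every uncountable separable metric space.
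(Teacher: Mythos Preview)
Your proof is correct and takes essentially the same approach as the paper's: set $a=\emptyset$, choose $b_i$ to be $\varepsilon_i$-nets with $\varepsilon_i\to 0$, use that bijective isometries carry nets to nets, and conclude that any union is dense and therefore cannot lie in $I$. You supply more detail than the paper on why the nets can be taken countable and closed; the one minor slip is that you justify the final step by asserting $I$ is the ideal \emph{generated} by the countable closed sets, whereas the stated hypothesis only says $I$ \emph{contains} them and excludes $X$---the paper's own proof invokes the same closure-implication without comment, so this is a shared lacuna in the statement rather than a defect peculiar to your argument.
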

\begin{proof}
    For a counterexample, let \(a=\emptyset\) and let \(b_n\) be a \(1/n\)-net of \(X\). That is, for each \(x\in X\), \(d(x,b_n)<1/n\). Then regardless of the choice of group elements \(\gamma_n\in Iso(X)\), we see that \(\gamma_n\cdot b_n\) is still a \(1/n\)-net of \(X\). In the end, \(\bigcup_{n\in \omega}\gamma_n\cdot b_n\) is a dense subset of \(X\) and hence its closure is \(X\) and does not belong to the ideal.
\end{proof}

We introduce this next non-example of dynamical \(\sigma\)-completeness in anticipation of how it contrasts with examples given in the following section. We first make the following definition:
\begin{defin}
    We define the Cantor Set Iteration to be a set \(X\subseteq\mathbb{R}^2\) as follows: let \(C_0=[0,-1]\) and let \(C_{i+1}=\frac{1}{3}C_i\cup \left(\frac{2}{3}+\frac{1}{3}C_i\right)\). Let \(K_i=\bigcup_{j\leq i} C_j\times [j,j+1]\). In the end, let \(X=\bigcup_{i\in\omega} K_i\).
\end{defin}

%\begin{defin} Let \(G\subseteq \mathbb{R}^3\) be a graph as viewed as a 1-dimensional subspace. Let \(G'\) be a thickening of \(G\) into three dimensions, and then let \(M\) denote the boundary of \(G'\). \(M\) is called the 2-dimensional shell of \(G\).
%\end{defin}
\begin{theorem}\label{Compact Non D-Sig C}
    Let \(X\) be the Cantor Set Iteration as defined above. Let \(\Gamma\) denote the group of self-homeomorphisms of \(X\) acting by application, and let \(I\) denote the ideal generated by compact nowhere dense sets. The ideal \((\Gamma\curvearrowright X, I)\) does not have cofinal orbits.
\end{theorem}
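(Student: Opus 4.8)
The plan is to show that the choice \(a=\emptyset\) admits no \(a\)-large set; since \(\pstab{\emptyset}=\Gamma\) and the failure of cofinal orbits requires only a single bad \(a\), this suffices. Because every member of \(I\) is contained in a single compact nowhere dense set (finite unions of compact nowhere dense sets are again compact nowhere dense) and any superset of an \(\emptyset\)-large set is \(\emptyset\)-large, it is enough to produce, for each compact nowhere dense \(b\), a set \(c\in I\) with \(c\not\subseteq\gamma\cdot b\) for every \(\gamma\in\Gamma\). The invariant I would exploit is the manifold boundary of \(X\).

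First I would verify that \(X\) is a \(2\)-manifold with boundary. A direct local analysis shows every point of \(X\) either has a neighborhood homeomorphic to \(\mathbb{R}^2\) (interior points of the solid strips \(C_j\times[j,j+1]\), and seam points where solid lies both above and below) or a neighborhood homeomorphic to a half-plane (the exposed top edges \((C_j\setminus C_{j+1})\times\{j+1\}\), the outer side rays, the bottom edge, and the corner and seam-endpoint points, each of which has a quarter- or three-quarter-plane neighborhood, hence a half-plane neighborhood). Let \(\partial X\) be the set of boundary points; since ``having a Euclidean neighborhood'' is a topological property, invariance of domain guarantees \(\gamma\cdot\partial X=\partial X\) for every \(\gamma\in\Gamma\). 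I would then describe the connected components of \(\partial X\) explicitly: each gap removed at stage \(m\) (passing from \(C_{m-1}\) to \(C_m\)) contributes one component consisting of two vertical rays rising from the endpoints of the exposed gap-segment at height \(m\), joined across the top by that segment; one further component is built from the bottom edge together with the two outer rays \(\{0,1\}\times[0,\infty)\). Each component is homeomorphic to \(\mathbb{R}\), distinct components are pairwise disjoint (their rays sit over distinct persistent Cantor endpoints), there are countably infinitely many, and the lowest point of the component arising at stage \(m\) has height \(m\).

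With this structure established, define \(\mu(c)\) to be the number of connected components of \(\partial X\) that \(c\) meets. This quantity is monotone under inclusion, is preserved by every \(\gamma\in\Gamma\) (homeomorphisms permute the components of \(\partial X\)), and is finite for every compact \(c\): a compact set is bounded in height by some \(H\) and can meet only those components whose lowest point has height at most \(H\), of which there are finitely many. On the other hand \(\mu\) is unbounded on \(I\), since for any \(N\) a finite set with one point on each of \(N\) distinct components is compact and nowhere dense with \(\mu=N\). Now given a compact nowhere dense \(b\) with \(\mu(b)=k\), choose \(c\in I\) with \(\mu(c)=k+1\); then \(\mu(\gamma\cdot b)=k<\mu(c)\) for every \(\gamma\in\Gamma\), so \(c\not\subseteq\gamma\cdot b\). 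Hence no \(b\in I\) is \(\emptyset\)-large and the ideal fails to have cofinal orbits.

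The main obstacle I anticipate is the second step: rigorously confirming that \(X\) is a manifold with boundary and that \(\partial X\) decomposes into exactly the claimed components. The delicate cases are the seam-endpoints \((\ell,m)\), where \(\ell\) is an endpoint of a \(C_m\)-interval, at which one must check that the neighborhood is a three-quarter-plane (so that \((\ell,m)\) is a boundary point rather than a branch point), and the verification that each persistent Cantor endpoint supports the ray of exactly one component, so that the components are genuinely disjoint and locally finite in height.
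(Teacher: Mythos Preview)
Your argument is correct. Both you and the paper exhibit a homeomorphism-invariant, inclusion-monotone, finite-valued but unbounded function on compact subsets of \(X\), and that is exactly what is needed to rule out an \(\emptyset\)-large set. The invariants, however, are different. The paper counts the connected components of \(X\setminus b\) whose closure is non-compact; you count the components of the manifold boundary \(\partial X\) that \(b\) meets. The paper's invariant has the advantage that it requires no local analysis of \(X\): one only needs that compact sets sit inside some \(K_i\) and that \(X\setminus K_i\) has \(2^i\) unbounded components, together with the elementary monotonicity lemma that enlarging a compact set cannot decrease the number of complementary components with non-compact closure. Your invariant is arguably more transparent once established (monotonicity and invariance are immediate), but the price is the verification that \(X\) really is a \(2\)-manifold with boundary and that \(\partial X\) decomposes as you describe; you have correctly identified the seam-endpoints \((\ell,m)\) as the only delicate case, and your claim that each such point has a three-quarter-plane neighborhood is right. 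One minor remark: the paper's proof, as written, is phrased in the language of dynamical \(\sigma\)-completeness (it produces a sequence \(b_i\) and shows \(\bigcup h_i(b_i)\) cannot have compact closure), whereas your write-up addresses cofinal orbits directly; both conclusions follow from the same invariant, but your framing matches the theorem statement more cleanly.
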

\begin{proof}
    Before showing that the ideal is not dynamically \(\sigma\)-complete, we first make a few remarks: let \(K_i\subseteq X\) be as defined in the construction of \(X\) and observe that \(X\setminus K\) consists of \(2^i\) components, each of which is has non-compact closure. Observe that any compact set \(a\subseteq X\) is contained in \(K_i\) for some \(i\), and that if \(a\subseteq b\) are both compact sets, then the number components of \(X\setminus a\) with non-compact closure is less than or equal to the number such components of \(X\setminus b\): if \(U\) is a component of \(X\setminus a\) such that \(\overline{U}\) is not compact, then note \(b\cap\overline{U}\) is compact. Hence \(\overline{U}\setminus b\) consists of components, at least one of which has non-compact closure.

    Now we show that the ideal does not have cofinal orbits: we let \(a=\emptyset\), and choose \(b_i\) to be compact nowhere dense such that \(X\setminus b_i\) contains \(2^i\) components whose closure is not compact (in particular, one can take \(b_i\) to be the boundary of \(K_i\)). Let \(h_i:X\rightarrow X\) be any homeomorphism; observe that  \(X\setminus h_i(b_i)\) still contains \(2^i\) components whose closure is not compact. Then we consider the set \(B=\bigcup_{n\in\omega} h_i(b_i)\) and show it has non-compact closure. Suppose to the contrary that \(c\supseteq B\) is a compact set, then \(c\subseteq K_i\) for some \(i\). Hence \(X\setminus c\) must contain at most \(2^i\) components which have non-compact closure. However, \(h_{i+1}(b_{i+1})\subseteq c\), so \(c\) must have at least \(2^{i+1}\) components which have non-compact closure, and we have reached a contradiction.
\end{proof}

\section{Ideals with cofinal orbits} In this section we present a couple examples of ideals which support a cone measure and hence have cofinal orbits. Our first examples come from considering the ideal of nowhere dense subsets. We are aware that while working on similar problems, \cite{Anett} has independently shown that the nowhere dense ideal has cofinal orbits in \(\mathbb{R}^n\). Here we show that the nowhere dense ideal supports a cone measure in any manifold, where an \(n\)-dimensional manifold is a second countable Hausdorff space which is locally Euclidean: every point has a neighborhood which is homeomorphic to either \(\mathbb{R}^n\) or \(\mathbb{R}_{\geq 0}\times \mathbb{R}^{n-1}.\) We make no distinction between whether or not a manifold has a boundary, as the following definition and all theorems apply equally to both. We first make the definition:
\begin{comment}\begin{defin}
    Let \(X\) be a compact connected metric space. We say \(X\) is an abstract Sierpi\'nski carpet if \(\)
\end{defin}
\end{comment}
\begin{defin}
    Let \(X\) be an \(n\)-dimensional connected manifold (with or without boundary), and let \(A\subseteq X\). Say that \(A\) is a flat S-carpet in \(X\) if \begin{enumerate}
        \item \(A\) is connected, closed, and nowhere dense.
        \item There are countably many components \(U_i\) of \(X\setminus A\) and each component is open.
        \item The set of closures of the complementary components \(\{\overline{U_i}:i\in \omega\}\) is pairwise disjoint.
        \item For each component \(U_i\) of \(X\setminus A\), there is an open set \(V_i\supsetneq \overline{U_i}\) such that \(h_i:V_i\rightarrow \mathbb{R}^n\) is a homeomorphism and also induces a homeomorphism between \(U_i\) and the open unit ball in \(\mathbb{R}^n\).
        \item If \((x_i:i\in\omega)\) is a sequence such that \(x_i\in \overline{U_i}\) and some subsequence converges \(x_{i_\ell}\rightarrow x\), then \(\overline{U_{i_\ell}}\rightarrow \{x\}\) in the hyperspace.
        %sense that every open neighborhood of \(x\) contains all but finitely many \(\overline{U_{f(i)}}\).
        %\item For each \(\overline{U_i}\subseteq V_i\), there is a self-homeomorphism \(\varphi_i\) of \(\mathbb{R}^n\) such that \(\varphi_ih_i(\overline{U_i})=B^{n}\), where \(B^n\) is the unit ball in \(\mathbb{R}^n\). That is, \(U_i\) is flat inside \(V_i\).
    \end{enumerate}
\end{defin}
We note that if \(X\) is a manifold with boundary and \(A\) is a flat S-carpet in \(X\), then \(A\) necessarily contains all boundary points of \(X\) by condition 4 in the definition. We also note that when \(X\) is compact and has a metric, condition 5 is equivalent to saying that \(diam(U_i)\rightarrow 0\).

The authors of \cite{Whyburn} and \cite{Cannon} study flat S-carpets in the sphere \(S^n\) (using the metric characterization of condition 5 instead of the one given here). We summarize the relevant results of the papers in the following theorem, which will lead to two immediate corollaries:
\begin{theorem} 
    Let \(X, Y\) be flat S-carpets in the sphere \(S^n\). Any homeomorphism between \(X,Y\) will induce a correspondence between the complementary components of the two sets. Further, any partial correspondence between the sets of complementary components can be extended to a homeomorphism between \(X,Y\). That is, if \(U_0,\dots, U_k, V_0,\dots, V_k\) are finitely many complementary components of \(X, Y\) respectively, then given homeomorphisms \(\varphi_i:bd(U_i)\rightarrow bd(V_i)\) between the boundaries of the complementary components, one can obtain a homeomorphism from \(\overline{\varphi}:X\rightarrow Y\) which extends each \(\varphi_i\).
\end{theorem}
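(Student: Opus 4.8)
The plan is to reduce both assertions to the positional characterizations of the Sierpi\'nski curve in \(S^n\) (Whyburn in dimension two, and Cannon in the higher-dimensional cases), whose essential content is that the boundary spheres form a family intrinsic to the carpet and that the carpet is determined, up to homeomorphism, by this family together with the way the complementary domains accumulate.

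For the first assertion, I would argue that \(\{bd(U_i)\}\) is a topological invariant of \(X\). In the model case \(n=2\) this is transparent: a boundary circle \(bd(U_i)\) is non-separating in \(X\) (one of its two complementary sides in \(S^2\), namely \(U_i\), contains no points of \(X\)), whereas any simple closed curve in \(X\) with carpet points on both sides does separate \(X\); thus the boundary circles are exactly the non-separating simple closed curves in \(X\), a property phrased purely in terms of the topology of \(X\). Supplying the analogous intrinsic criterion for the \((n-1)\)-spheres \(bd(U_i)\) in higher dimensions is precisely the work of Cannon's characterization. Granting the criterion, any homeomorphism \(f:X\to Y\) carries each \(bd(U_i)\) onto some \(bd(V_{\sigma(i)})\); since by condition 3 distinct boundary spheres bound distinct complementary components with pairwise disjoint closures, \(i\mapsto\sigma(i)\) is a bijection, which is the asserted correspondence.

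For the extension assertion, I would first promote the given finite matching to a bijection of all complementary components by a back-and-forth argument. The components of \(S^n\setminus X\) and of \(S^n\setminus Y\) are countable by condition 2, so enumerate them; beginning from the pairs \((U_i,V_i)\) for \(i\leq k\) with the prescribed \(\varphi_i\), at each stage adjoin the least unmatched component on one side, pair it with an unmatched component on the other, and select any homeomorphism between their \((n-1)\)-spheres. Alternating sides produces a bijection \(\sigma\) together with boundary homeomorphisms \(\varphi_i:bd(U_i)\to bd(V_{\sigma(i)})\) extending the original data. The set \(\bigcup_i bd(U_i)\) is dense in \(X\) (any ball meeting \(X\) at a point must, since \(X\) is nowhere dense, meet some \(U_i\) properly and hence meet \(bd(U_i)\)), and the union of the \(\varphi_i\) defines \(\overline\varphi\) on this dense set as a bijection onto \(\bigcup_j bd(V_j)\) by condition 3.

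It remains to extend \(\overline\varphi\) continuously across the residual points of \(X\) (those on no boundary sphere) and to verify that the result is a homeomorphism, and I expect this to be the main obstacle. A residual point \(x\) is a limit of points \(x_m\in bd(U_{i_m})\), and condition 5 forces \(\overline{U_{i_m}}\to\{x\}\) in the hyperspace; one would like to set \(\overline\varphi(x)\) equal to the hyperspace limit of the matched closures \(\overline{V_{\sigma(i_m)}}\). The difficulty is that this convergence is \emph{not} automatic from an arbitrary combinatorial matching: continuity of \(\overline\varphi\) and of its inverse requires that families of domains collapsing to a point in \(X\) be matched to families collapsing to a point in \(Y\), and conversely. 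Thus the back-and-forth must be run so as to respect the accumulation pattern recorded by condition 5, and proving that an admissible matching can always be arranged, so that \(\overline\varphi\) is a genuine homeomorphism rather than merely a continuous bijection defined on a dense set, is the technical heart of the cited characterizations and the step I would treat most carefully.
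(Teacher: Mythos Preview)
The paper does not give its own proof of this theorem; it is stated as a summary of results from \cite{Whyburn} and \cite{Cannon}, with the remark that the case \(n=4\) follows once the annulus theorem is known there. Your sketch is in line with the strategy of those references---boundary spheres are intrinsic to the carpet, and one builds the global homeomorphism by extending a matching of complementary domains---and you correctly isolate the real difficulty, namely that an arbitrary back-and-forth matching need not yield a continuous extension, so the matching must be chosen compatibly with the accumulation data of condition~5; this is exactly the technical content handled in the cited papers rather than in the present one.
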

Of note is that \cite{Cannon} only claims the results for \(n\neq 4\). However, developments have occurred since the paper was first published, namely that the annulus theorem since been proven in the relevant case, so we observe the results hold in the case \(n=4\) here. We get the following immediate corollaries.

\begin{corollary}
    Let \(X, Y\) be two flat S-carpets in \(S^n\), and let \(h:X\rightarrow Y\) be a homeomorphism between the two carpets. Then \(h\) can be extended to a homeomorphism \(\overline{h}:S^n\rightarrow S^n\).
\end{corollary}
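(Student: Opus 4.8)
The plan is to build \(\overline h\) by keeping \(h\) on the carpet and filling in each complementary cell separately, then checking that the pieces glue continuously. First, the preceding characterization theorem guarantees that \(h\) induces a bijection \(\sigma\) between the complementary components of \(S^n\setminus X\) and those of \(S^n\setminus Y\); write \((U_i)_{i\in\omega}\) and \((V_j)_{j\in\omega}\) for these components, so that \(h\) carries \(bd(U_i)\) homeomorphically onto \(bd(V_{\sigma(i)})\) (the boundaries lie in the respective carpets since each complementary component is open and each carpet is closed). Since \(S^n=X\cup\bigcup_i\overline{U_i}\) and the closures \(\overline{U_i}\) are pairwise disjoint by condition 3, it suffices to extend each boundary homeomorphism over the corresponding closed cells and paste.

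Next I would extend over each cell. By condition 4, each \(\overline{U_i}\) and each \(\overline{V_j}\) is the image of the closed unit ball under the chart homeomorphism, hence a closed \(n\)-cell whose boundary is an \((n-1)\)-sphere. Conjugating by these charts reduces the task to extending a homeomorphism between the boundary spheres of two standard closed balls to a homeomorphism of the balls, which is the Alexander trick (cone the boundary map radially). This produces homeomorphisms \(g_i:\overline{U_i}\to\overline{V_{\sigma(i)}}\) agreeing with \(h\) on \(bd(U_i)\). Define \(\overline h\) to equal \(h\) on \(X\) and \(g_i\) on \(\overline{U_i}\); the definitions agree on the overlaps \(bd(U_i)\subseteq X\), and since \(\sigma\) is a bijection, \(\overline h\) is a bijection of \(S^n\).

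The substance of the argument is continuity of \(\overline h\) at points of the carpet \(X\), which I would verify via the sequential criterion. Given \(p_m\to x\) with \(x\in X\), the delicate case is when the \(p_m\) lie in infinitely many distinct cells \(U_{i_m}\). Choosing \(q_m\in bd(U_{i_m})\) with \(q_m\to x\) — available because condition 5 applied to \(X\) forces \(\overline{U_{i_m}}\to\{x\}\) in the hyperspace — continuity of \(h\) gives \(h(q_m)\to h(x)\) with \(h(q_m)\in bd(V_{\sigma(i_m)})\). Now condition 5 applied to \(Y\), with the points \(h(q_m)\in\overline{V_{\sigma(i_m)}}\) converging to \(h(x)\), forces \(\overline{V_{\sigma(i_m)}}\to\{h(x)\}\); since \(\overline h(p_m)=g_{i_m}(p_m)\in\overline{V_{\sigma(i_m)}}\), we conclude \(\overline h(p_m)\to h(x)\). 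The remaining cases — infinitely many \(p_m\in X\), or infinitely many in a single cell \(U_k\), in which case \(x\in\overline{U_k}\cap X=bd(U_k)\) — follow from continuity of \(h\) and of the individual \(g_i\).

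Finally, because \(S^n\) is compact Hausdorff, the continuous bijection \(\overline h\) is automatically a homeomorphism, so no separate argument for \(\overline h^{-1}\) is needed. The main obstacle is exactly the middle continuity case: transferring the shrinking of the cells \(\overline{U_{i_m}}\) across \(h\) to shrinking of the cells \(\overline{V_{\sigma(i_m)}}\). This is what couples the two appeals to condition 5 and is precisely the reason the flat S-carpet definition builds the hyperspace-convergence hypothesis into both carpets simultaneously.
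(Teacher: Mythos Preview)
Your argument is correct and matches the paper's proof essentially step for step: define \(\overline h\) by keeping \(h\) on the carpet and extending over each complementary cell, then verify continuity sequentially by splitting into the three cases and, in the interesting case, invoking condition~5 first on \(X\) to replace the sequence by boundary points and then on \(Y\) to force the image cells to shrink. The only cosmetic differences are that you spell out the Alexander trick where the paper simply asserts the existence of the cell extensions, and you finish by citing compact Hausdorff to upgrade the continuous bijection to a homeomorphism, whereas the paper just remarks that the same sequential argument applied to \(\overline h^{-1}\) gives continuity of the inverse.
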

\begin{proof}
    The previous theorem tells us that \(h\) induces a correspondence between the complementary components of \(X,Y\). Let \(U_i\) denote the complementary components of \(X\) and denote the corresponding complementary components of \(Y\) induced by \(h\) as \(V_i\). Let \(h_i:\overline{U_i}\rightarrow\overline{V_i}\) be a homeomorphism extending \(h\restriction bd(U_i)\). Let \(\overline{h}=h\cup\bigcup_{i\in\omega}h_i\) - this is well-defined since \(h_i\) equals \(h\) everywhere the domains overlap. 
    
    To see that \(\overline{h}\) is continuous, let \((x_m:m\in\omega)\) be a sequence such that \(x_m\rightarrow x\). If either infinitely many \(x_m\) belong to \(X\) or infinitely many \(x_m\) belong to the same \(U_i\), then \(\overline{h}(x_m)\rightarrow \overline{h}(x)\) by continuity of \(\overline{h}\) on \(X\) or \(U_i\) respectively. Otherwise we can pass to a subsequence such that \(x_m\) each belong to distinct \(U_{i_m}\). In this case, we use the fact that \(X\) is a flat S-carpet to note that it satisfies condition 5 in the definition above and find a sequence consisting of \(x'_m\in bd(U_{i_m})\) such that \(x'_m\rightarrow x\). Since \(x'_m\in X\), we see that \(\overline{h}(x'_m)\rightarrow \overline{h}(x)\). From here, using the fact that \(Y\) satisfies condition 5 tells us that \(\overline{h}(x_m)\rightarrow \overline{h}(x)\). Hence \(\overline{h}\) is continuous, and the same argument shows that \(h^{-1}\) is continuous.
\end{proof}
\begin{corollary}\label{Filled-homeomorphism}
    Let \(M\) be a manifold obtained by removing \(k\) disjoint flat balls from the sphere \(S^n\), and let \(X,Y\subseteq M\) be flat S-carpets in \(M\). There is a homeomorphism \(\varphi\) of \(M\) which fixes the boundary of \(M\) pointwise and whose restriction to \(X\) is a homeomorphism from \(X\) to \(Y\). 
    In particular, if \(k=1\), then we see that \(M\simeq [0,1]^n\) is the \(n\)-dimensional cube, and if \(k=2,\) then \(M\simeq [0,1]\times S^{n-1}\) is the annulus.
\end{corollary}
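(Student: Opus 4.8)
The plan is to fill the $k$ removed balls back in, reinterpret $X$ and $Y$ as flat S-carpets in the whole sphere $S^n$, and then apply the characterization theorem together with the preceding corollary, arranging the prescribed correspondence so that each refilled ball is fixed. First I would set up the refilling: write $M=S^n\setminus\bigcup_{i=1}^k B_i^\circ$, where $B_1,\dots,B_k$ are the removed disjoint flat balls, so that $bd(M)=\bigcup_i bd(B_i)$. By the remark following the definition of a flat S-carpet, both $X$ and $Y$ contain $bd(M)$. Consequently each open ball $B_i^\circ$ is disjoint from $X$ (resp.\ $Y$) and has its frontier $bd(B_i)$ contained in $X$ (resp.\ $Y$), so $B_i^\circ$ is a single complementary component of $X$ (resp.\ $Y$) \emph{in $S^n$}.

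The technical heart is then to verify that $X$ and $Y$, now regarded inside $S^n$, satisfy all five conditions of a flat S-carpet in $S^n$. The complementary components are the original components $U_j$ of $X$ in $M$ together with the finitely many balls $B_i^\circ$. Connectedness and closedness are inherited, and nowhere density holds since $X=(X\cap int(M))\cup bd(M)$ is a union of two nowhere dense sets. The point requiring care is conditions 3 and 4: because condition 4 in $M$ supplies each $U_j$ with an open neighborhood homeomorphic to $\mathbb{R}^n$, invariance of domain forces $\overline{U_j}\subseteq int(M)$, so the closures $\overline{U_j}$ avoid $bd(M)$ and are therefore disjoint from every $\overline{B_i}$; flatness of each $B_i^\circ$ is exactly the hypothesis that the removed balls are flat, giving condition 4 for the ball components. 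Condition 5 is unaffected by adjoining finitely many ball components, since any sequence running through distinct components meets each ball at most once, so its behavior is governed by the $U_j$.

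With $X,Y$ established as flat S-carpets in $S^n$, I would invoke the characterization theorem, prescribing the partial correspondence that matches the ball $B_i^\circ$ of $X$ to the ball $B_i^\circ$ of $Y$ via the identity $\mathrm{id}\colon bd(B_i)\to bd(B_i)$ for each $i$. This yields a homeomorphism $\overline{\varphi}\colon X\to Y$ that restricts to the identity on $bd(M)$. Applying the preceding corollary extends $\overline{\varphi}$ to a homeomorphism $\widehat{\varphi}\colon S^n\to S^n$, whose restriction to each $\overline{B_i}$ carries $B_i^\circ$ onto $B_i^\circ$ and fixes $bd(B_i)$ pointwise. Hence $\widehat{\varphi}(M)=S^n\setminus\bigcup_i B_i^\circ=M$, and $\varphi:=\widehat{\varphi}\restriction M$ is a self-homeomorphism of $M$ that fixes $bd(M)$ pointwise and carries $X$ onto $Y$, as required. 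The final clauses then reduce to the standard identifications of $S^n$ minus one open ball with $[0,1]^n$ and of $S^n$ minus two disjoint open balls with $[0,1]\times S^{n-1}$.

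I expect the main obstacle to be the verification in the second step that refilling the balls preserves the flat S-carpet structure, and in particular the invariance-of-domain argument keeping the complementary closures $\overline{U_j}$ off $bd(M)$, which is precisely what lets conditions 3 and 4 survive the passage from $M$ to $S^n$. Once that is in place, the conclusion is a direct application of the two results already established, with the only design choice being the prescription of the identity on each ball boundary so that the resulting sphere homeomorphism preserves $M$ and fixes $bd(M)$.
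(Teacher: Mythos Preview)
Your proposal is correct and follows essentially the same route as the paper: view $M$ inside $S^n$ by refilling the $k$ balls, regard $X$ and $Y$ as flat S-carpets in $S^n$, invoke the characterization theorem with the identity prescribed on each boundary sphere, extend to a self-homeomorphism of $S^n$ via the preceding corollary, and restrict to $M$. Your treatment is in fact more careful than the paper's terse sketch, particularly in checking conditions 3--5 for the refilled carpets (via invariance of domain) and in explaining why the extended sphere homeomorphism must carry $M$ to itself.
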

\begin{proof}
    Let \(\varphi\) embed \(M\) into \(S^{n}\) and let \(h:\varphi(X)\rightarrow \varphi(Y)\) be a homeomorphism which extends the identity map on the boundary of \(\varphi(M)\). Extend \(h\) to \(\overline{h}\), a homeomorphism of the entire sphere as in the previous corollary. Observe that \(h(\varphi(M))\) is a homeomorphism of \(\varphi(M)\), which fixes the boundary and hence the composition \(\varphi^{-1}h\varphi(M)\) is a homeomorphism of \(M\) which maps \(X\) to \(Y\) and fixes the boundary of \(M\) pointwise.
    \end{proof}
We will show that in any manifold \(X\), the set of flat S-carpets in \(X\) is the support of a cone measure in the permutation model. We first show that this set is cofinal:
\begin{prop}\label{superset-Carpet}
    Let \(X\) be a separable \(n\)-dimensional manifold (with or without boundary), let  \(a\subseteq X\) be closed nowhere dense. There is a flat S-Carpet \(K\) in \(X\) such that \(a\subseteq K\).
\end{prop}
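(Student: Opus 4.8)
The plan is to build $K$ by carving countably many pairwise-disjoint ``flat balls'' out of $X\setminus a$ and letting $K$ be what remains. Fix a compatible metric $d$ on $X$ (a manifold in this sense is second countable and regular, hence metrizable) and a countable base $\{W_j:j\in\omega\}$ of coordinate balls, i.e.\ open sets each lying in a single chart and corresponding to a round ball there. I will recursively choose open coordinate balls $U_0,U_1,\dots$ subject to: $\overline{U_i}\cap a=\emptyset$; the $\overline{U_i}$ are pairwise disjoint; $diam(U_i)<2^{-i}$; and $\bigcup_i U_i$ is dense. Then $K:=X\setminus\bigcup_i U_i$ is closed and contains $a$, and because the $\overline{U_i}$ are pairwise disjoint the components of $X\setminus K$ are exactly the $U_i$. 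Taking $U_i$ to be a coordinate ball lets me take a slightly larger concentric coordinate ball as the $V_i\supsetneq\overline{U_i}$ required by condition 4, so conditions 2, 3 and 4 are immediate. Density of $\bigcup_i U_i$ makes $K$ nowhere dense, and the bound $diam(U_i)<2^{-i}$ gives condition 5: if distinct components $U_{i_\ell}$ carry points $x_{i_\ell}\to x$ then the indices $i_\ell\to\infty$, so $diam(\overline{U_{i_\ell}})\to 0$ while $x_{i_\ell}\in\overline{U_{i_\ell}}$, forcing $\overline{U_{i_\ell}}\to\{x\}$ in the hyperspace.

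The recursion itself is routine. Write $F_i=X\setminus\bigcup_{j<i}U_j$ and $G_i=\bigcup_{j<i}\overline{U_j}$, so that $int(F_i)=X\setminus G_i$. At stage $i$ I consult the base element $W_i$: if $W_i$ already meets $\bigcup_{j<i}U_j$ I carve nothing (and reindex at the end); otherwise $W_i\subseteq F_i$, and since $a$ together with the finitely many bounding spheres in $G_i$ is nowhere dense, the open set $W_i\cap int(F_i)\setminus a$ is nonempty, so I place a coordinate ball $U_i$ inside it with $\overline{U_i}$ disjoint from $a\cup G_i$ and $diam(U_i)<2^{-i}$. Every $W_j$ is thereby eventually met by $\bigcup_i U_i$, giving density, and all remaining bookkeeping constraints hold by construction.

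What remains, and what I expect to be the crux, is connectedness of $K$ (condition 1); here I use that $X$ is connected, as it must be for a flat S-carpet to exist. Each finite-stage set $F_i$ is connected: for $n\ge 2$, deleting an open flat ball whose closure lies in the interior of a connected $n$-manifold region leaves a connected manifold-with-boundary, so connectedness of the $F_i$ follows by induction. Since $K=\bigcap_i F_i$ is a decreasing intersection of the closed connected sets $F_i$, when $X$ is \emph{compact} the classical fact that a nested intersection of continua is a continuum shows $K$ is connected. The genuine difficulty is the non-compact case, where a nested intersection of connected closed sets may disconnect; I would handle it either by passing to the one-point compactification $\widehat X=X\cup\{\infty\}$, where the $\widehat F_i=F_i\cup\{\infty\}$ are continua and $\bigcap_i\widehat F_i=K\cup\{\infty\}$ is connected, and then arguing that $\infty$ is not a cut point of this limit, or, more robustly, by carrying out the carving relative to a compact connected exhaustion $X=\bigcup_m X_m$ along a fixed proper backbone so that the pieces $K\cap X_m$ stay linked. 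I would also flag that the statement presumes $n\ge 2$: in a $1$-manifold a connected nowhere dense set is a single point, so the proposition is degenerate (indeed false when $a$ has two points) and should be read for $n\ge 2$.
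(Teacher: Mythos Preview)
Your approach is essentially the paper's: enumerate a countable basis, recursively carve a pairwise-disjoint family of flat open balls out of $X\setminus a$, and let $K$ be the complement. The one technical difference is in how condition~5 is secured. You force $diam(U_i)<2^{-i}$, which gives the hyperspace convergence directly via the metric. The paper instead imposes a combinatorial nesting condition: each $V_i$ is chosen so that for every earlier basic set $W_j$ one has either $V_i\cap W_j=\emptyset$ or $V_i\subseteq W_j$; then any basic neighborhood $W_k\ni x$ eventually contains the whole of $\overline{U_{i_\ell}}$ once $i_\ell>k$, with no appeal to a metric. Both devices work; yours is the more metric-flavored variant, the paper's is purely order-theoretic in the basis.

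You also go further than the paper on connectedness. The paper simply asserts that conditions 1--4 hold ``by construction'' and never argues condition~1, whereas you correctly isolate it as the substantive point (and your remark that the statement is degenerate for $n=1$ is well taken). Your nested-continua argument handles the compact case, and the one-point-compactification route you sketch for the non-compact case can be pushed through, though as you acknowledge it still needs the check that $\infty$ is not a cut point of $K\cup\{\infty\}$.
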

\begin{proof}
    Let \((W_i:i\in \omega)\) be a countable basis for \(X\). Recursively construct open sets \(V_i,U_i\) as follows: if \(W_i\subseteq U_j\) for some \(j<i\), then let \(V_i,U_i=\emptyset\). Otherwise, let \(V_i\subseteq W_i\) be open such that for \(j<i\), either \(V_i\cap W_j=\emptyset\) or \(V_i\subseteq W_j\). Further, choose \(V_i\) to be disjoint from \(a\) and \(U_j\) for \(j<i\), and let \(\varphi_i:\mathbb{R}^n\rightarrow V_i\) be a homeomorphism. Let \(B\) denote the open unit ball in \(\mathbb{R}^n\), and let \(U_i\subseteq V_i\) be the open set given by \(\varphi_i(B)\). Let \(K=X\setminus\bigcup_{i\in\omega}U_i\). By construction, \(K\) satisfies requirements 1-4 to be a flat S-carpet in \(X\). 
    
    To see that requirement 5 is satisfied, suppose \((x_i:i\in\omega)\) is a sequence such that \(x_i\in \overline{U_i}\), and \(x_i\) has a subsequence \((x_{i_\ell}:\ell\in\omega)\) which converges to the point \(x\). Let \(\mathcal{O}=W_k\) be a basic open neighborhood of \(x\). Now let \(N>k\) be such that if \(\ell\geq N\), then \(x_{i_\ell}\in \mathcal{O}\). Observe that since \(x_{i_\ell}\in \overline{U_{i_\ell}}\), we know \(\overline{U_{i_\ell}}\) has nonempty intersection with \(\mathcal{O}\) and is hence contained in \(\mathcal{O}\).
    %Without loss of generality, suppose \(A\) contains any boundary points of \(X\). Let \((x_i:i\in\omega)\) be a sequence of points in the interior of \(X\) which forms a countable dense subset of \(X\). Recursively construct \(U_i\) such that if \(x_i\in A\cup\overline{U_j}\) for some \(j<i\), \(U_i=\emptyset\), and otherwise let \(V_i\subseteq X\) be an open Euclidean neighborhood of \(x_i\) disjoint from \(A\) and \(\overline{U_j}\) for \(j<i\), and then let \(U_i\) be as needed in the definition of a flat-embedded \(S\)-carpet. Let \(K=X\setminus \bigcup_{i\in\omega}U_i\). By construction, each of the necessary requirements for \(K\) to be a flat-embedded \(S\)-carpet are satisfied.
\end{proof}

\begin{theorem} \label{Carpet-cone}
    Let \(X\) be a \(n\)-dimensional connected manifold (with or without boundary), and let \(I\) be the ideal generated by closed nowhere dense sets. Let \(C\subseteq I\) be the set of flat S-carpets in \(X\). \(C\) is the support of a cone measure.
\end{theorem}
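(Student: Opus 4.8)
The plan is to verify the two clauses of Definition \ref{Cone M} for the set $C$ of flat S-carpets in $X$: that $C$ is cofinal in $I$ with respect to inclusion, and that for every $D \subseteq C$ in the permutation model, there is some $a \in I$ whose cone $\cone a$ lands entirely inside or entirely outside $D$. Cofinality is already handled by Proposition \ref{superset-Carpet}: every closed nowhere dense set $a$ is contained in a flat S-carpet $K$, so I would cite that directly.

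The substance is the cone-splitting condition, and here I would exploit the homogeneity encoded in Corollary \ref{Filled-homeomorphism}. The key observation is that any two flat S-carpets in $X$ look the same from the point of view of $\pstab{a}$ once they agree on the finitely many complementary components that meet $a$. Concretely, given $D \subseteq C$ in $W[[X]]$, let $a \in I$ be a set witnessing that $D$ is symmetric, i.e. $\pstab a \subseteq \stab D$. I would then argue that membership in $D$ of a carpet $K \in \cone a$ (so $a \subseteq K$) depends only on the ``type'' of $K$ relative to $a$, and that any two carpets $K, K' \supseteq a$ can be carried to one another by a homeomorphism fixing $a$ pointwise. Given such $\gamma \in \pstab a$ with $\gamma \cdot K = K'$, since $\gamma \in \stab D$ we get $K \in D \iff K' \in D$. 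Hence $\cone a$ is either entirely inside $D$ or entirely disjoint from it, which is exactly what is required.

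The main obstacle will be producing, for carpets $K, K' \in \cone a$, a homeomorphism $\gamma$ of $X$ fixing $a$ pointwise with $\gamma \cdot K = K'$. The difficulty is that $\gamma$ must fix not just $a$ as a set but every point of $a$, while still matching up the two carpet structures. I would handle this by localizing: enlarge $a$ if necessary so that it sits nicely inside $K$ and $K'$, cover the relevant region by charts arising from condition 4 in the definition of a flat S-carpet, and within each chart appeal to Corollary \ref{Filled-homeomorphism} (identifying a piece of $X$ with a sphere minus finitely many flat balls) to realign the complementary components of $K$ with those of $K'$ while fixing the boundary --- and hence fixing $a$ --- pointwise. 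Because the complementary components shrink (condition 5), one can patch countably many such local homeomorphisms together into a single self-homeomorphism of $X$, with continuity at accumulation points following exactly the argument used in the continuity check in the corollary proofs above.

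The remaining care is bookkeeping: I must ensure that $a$, once suitably enlarged to a closed nowhere dense set, is disjoint from all the open complementary components being rearranged, so that fixing the carpet-boundaries pointwise genuinely fixes $a$ pointwise. I would also confirm that $C$ lies in the permutation model and that the quantification ``for every $D \subseteq C$'' is interpreted inside $W[[X]]$, so that an appropriate symmetry witness $a \in I$ for $D$ genuinely exists by definition of $W[[X]]$. With these pieces in place, the homogeneity argument closes the proof.
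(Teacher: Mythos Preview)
Your overall strategy matches the paper's: take a symmetry witness $a$ for $D$, enlarge it to a flat S-carpet, and then argue that $\pstab a$ acts transitively on the carpets above it by patching together homeomorphisms from Corollary~\ref{Filled-homeomorphism} inside each complementary cell $\overline{U_i}$ of $a$. The gap is in that last step. Corollary~\ref{Filled-homeomorphism} requires $K\cap\overline{U_i}$ and $K'\cap\overline{U_i}$ to be flat S-carpets \emph{in} $\overline{U_i}$, and for an arbitrary carpet $K\supseteq a$ this can fail: a complementary ball $V$ of $K$ lying in $U_i$ may have $\overline V$ meeting $\partial U_i$, and then condition~4 of the definition fails inside the manifold-with-boundary $\overline{U_i}$, since no Euclidean chart around $\overline V$ can avoid that boundary point. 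So the transitivity claim you need is not established by the tool you invoke.

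The paper closes this with a step you do not isolate. After replacing $a$ by a carpet, it passes to a strictly larger carpet $b\supseteq a$ chosen so that $b\cap\overline{U_i}$ is already a flat S-carpet in each $\overline{U_i}$. For any $c\in\cone b$, every complementary ball of $c$ then sits inside some complementary ball of $b$, whose closure is contained in the open set $U_i$; this forces $c\cap\overline{U_i}$ to be flat in $\overline{U_i}$, and Corollary~\ref{Filled-homeomorphism} now applies cleanly. The cone that works is $\cone b$, not $\cone a$. Your phrase ``enlarge $a$ if necessary so that it sits nicely inside $K$ and $K'$'' does not name this property, and read literally it makes the enlargement depend on the particular pair $K,K'$, which cannot produce a single cone.
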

\begin{proof}
    Working in the permutation model, let \(D\subseteq C\) be a cofinal subset, and let \(a\in I\) be such that \(\pstab a\subseteq \stab D\). Without loss of generality, we can assume that \(a\in C\) by enlarging via the previous proposition. Let \(b\in C\) be such that for each complementary component \(U_i\) of \(a\), \(b\cap \overline{U_i}\) is a flat S-carpet in \(\overline{U_i}\). We will show that \(\cone b\subseteq D\): let \(c\in C, d\in D\) such that \(b\subseteq c,d\). Let \(c_i=c\cap U_i\), and \(d_i=d\cap U_i\); observe that since \(b,c\) and \(d\) are flat in \(X\), and \(b\cap \overline{U_i}\) is flat in \(\overline{U_i}\), we have \(c_i,d_i\) are flat in \(\overline{U_i}\). Inside each \(U_i\), use Corollary \ref{Filled-homeomorphism} to find a self-homeomorphism \(\varphi_i\) of \(\overline{U_i}\) which fixes the boundary and maps \(d_i\) to \(c_i\). In the end, let \(\varphi\) be equal to \(\varphi_i\) on \(\overline{U_i}\) and the identity elsewhere, and observe that \(\varphi\) is a homeomorphism of \(X\) such that \(\varphi\in\pstab {a}\). By construction, \(\varphi(d)=c\), and since \(\varphi\in\pstab {a}\subseteq \stab D\), we conclude \(c\in D\).
\end{proof}

We note that by Theorem \ref{Compact Non D-Sig C}, it is not true that for every manifold, the ideal of compact nowhere dense sets supports a cone measure. However, there are certainly manifolds where the ideal of compact nowhere dense sets do support a cone measure, as illustrated in the following theorem:
%\begin{comment}
\begin{theorem}\label{Compact NWD Cone}
    Let \(n\in \omega\), and let \(X=\mathbb{R}^n\) with the usual topology. For ease of notation, we will represent \(\mathbb{R}^n\) as \( S^{n-1}\times\mathbb{R}_{\geq 0}\), and define the closed balls \(B_r=S^{n-1}\times [0,r]\). Let \(I\) denote the ideal generated by compact nowhere dense sets. Let \(C\subseteq I\) be the set of flat S-carpets in \(B_r\) for some \(r>0\). \(C\) is a cofinal subset of \(I\) and is the support of a cone measure.
\end{theorem}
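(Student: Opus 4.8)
The plan is to follow the template of Theorem \ref{Carpet-cone}, viewing each closed ball \(B_r=\{x:|x|\le r\}\) as a compact manifold with boundary (homeomorphic to the \(n\)-cube, i.e.\ \(S^n\) with one flat ball removed), and to add the one genuinely new ingredient: elements of \(C\) fill balls of \emph{different} radii, so the relating homeomorphisms must also act on the unbounded exterior. For cofinality, let \(a\in I\) be compact nowhere dense and pick \(r>0\) with \(a\subseteq int(B_r)\), which is possible by compactness; then \(a\) is closed and nowhere dense in the manifold \(B_r\), so Proposition \ref{superset-Carpet} yields a flat S-carpet \(K\) in \(B_r\) with \(a\subseteq K\). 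Since \(B_r\) is compact and \(K\) is closed in \(B_r\), \(K\) is a compact flat S-carpet, so \(K\in C\); hence \(C\) is cofinal in \(I\).

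For the cone measure, work in the permutation model, fix \(D\subseteq C\), and choose \(a\in I\) with \(\pstab{a}\subseteq\stab{D}\); using cofinality we may enlarge \(a\) and assume \(a\in C\), say \(a\) is a flat S-carpet in \(B_{r_0}\) with \(r_0>0\), so \(S^{n-1}\times\{r_0\}\subseteq a\). The complementary components of \(a\) in \(\mathbb{R}^n\) are the components \(U_i\) lying in \(int(B_{r_0})\), whose closures \(\overline{U_i}\) are flat closed balls contained in \(int(B_{r_0})\) (by condition~4, so they avoid \(bd(B_{r_0})\)), together with the exterior \(E=\{x:|x|>r_0\}\), whose closure \(\overline E=\{|x|\ge r_0\}\) is homeomorphic to \(S^{n-1}\times[r_0,\infty)\). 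A homeomorphism in \(\pstab{a}\) may therefore act on each \(\overline{U_i}\) and on \(\overline E\) independently, fixing the shared boundary spheres. I then take the cone base \(b\in C\) to be a flat S-carpet in some \(B_{r_b}\) with \(r_b>r_0\) and \(a\subseteq b\), arranged (by applying Proposition \ref{superset-Carpet} inside each \(\overline{U_i}\) and inside the annulus \(S^{n-1}\times[r_0,r_b]\)) so that \(b\cap\overline{U_i}\) is a flat S-carpet in \(\overline{U_i}\) for each \(i\) and \(b\cap\{r_0\le|x|\le r_b\}\) is a flat S-carpet in that annulus.

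Now take any \(c,d\in\cone b\); each is a flat S-carpet in some \(B_{r_c},B_{r_d}\), and since \(S^{n-1}\times\{r_b\}\subseteq b\subseteq c\subseteq B_{r_c}\) we get \(r_c,r_d\ge r_b\). Because \(b\) refines \(a\) as above, \(c\cap\overline{U_i}\) and \(d\cap\overline{U_i}\) are flat S-carpets in the closed ball \(\overline{U_i}\), while \(c\cap\overline E\) and \(d\cap\overline E\) are flat S-carpets in the annuli \(S^{n-1}\times[r_0,r_c]\) and \(S^{n-1}\times[r_0,r_d]\) (each containing both boundary spheres). On each \(\overline{U_i}\) I use Corollary \ref{Filled-homeomorphism} with \(k=1\) to obtain \(\varphi_i\) fixing \(bd(U_i)\) with \(\varphi_i(d\cap\overline{U_i})=c\cap\overline{U_i}\). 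For the exterior I first apply a radial rescaling \(\sigma\) of \(\overline E\), fixing \(S^{n-1}\times\{r_0\}\) pointwise and sending \(S^{n-1}\times[r_0,r_d]\) onto \(S^{n-1}\times[r_0,r_c]\), and then apply Corollary \ref{Filled-homeomorphism} with \(k=2\) inside \(S^{n-1}\times[r_0,r_c]\) to map \(\sigma(d\cap\overline E)\) to \(c\cap\overline E\) by a homeomorphism fixing both boundary spheres; extending by the identity beyond radius \(r_c\) produces \(\varphi_E\), fixing the inner sphere pointwise, with \(\varphi_E(d\cap\overline E)=c\cap\overline E\). Gluing the \(\varphi_i\), \(\varphi_E\), and the identity on \(a\) gives \(\varphi\in\pstab{a}\) with \(\varphi(d)=c\); well-definedness and continuity at points of \(a\) follow exactly as in the continuity argument for the extension in Corollary \ref{Filled-homeomorphism}, using condition~5. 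Since \(\pstab{a}\subseteq\stab{D}\), the set \(D\) is \(\pstab{a}\)-invariant, so if \(\cone b\) contains some \(d\in D\) then \(c=\varphi(d)\in D\) for every \(c\in\cone b\), giving \(\cone b\subseteq D\); otherwise \(\cone b\cap D=\emptyset\). Thus \(b\) is the required element of \(I\).

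The main obstacle is precisely the exterior region: unlike Theorem \ref{Carpet-cone}, where all carpets live in one fixed manifold, here two cone elements generally fill balls of different radii, so one must both verify that \(c\cap\overline E\) and \(d\cap\overline E\) are honest flat S-carpets in annuli of different lengths and then relate them by an ambient homeomorphism fixing \(a\). This is exactly what the radial rescaling combined with the \(k=2\) (annulus) case of Corollary \ref{Filled-homeomorphism} accomplishes. Secondary points needing care are the existence of a cone base \(b\) simultaneously refining \(a\) on the interior components and on the exterior collar, and the routine check that \(a\) can be taken inside \(int(B_r)\) and stays nowhere dense there, both of which are straightforward.
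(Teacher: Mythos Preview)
Your proof is correct and follows essentially the same route as the paper: radial rescaling to match the outer radii, then Corollary \ref{Filled-homeomorphism} in the annulus case (\(k=2\)) on the exterior collar and the \(k=1\) case inside each interior complementary component, glued along the boundary spheres. The only cosmetic differences are that you first enlarge \(a\) to a flat S-carpet in \(B_{r_0}\) (the paper leaves \(a\) arbitrary inside \(B_r\) and defers the enlargement to the invocation of Theorem \ref{Carpet-cone} on \(B_r\)) and that you explicitly treat the case \(\cone b\cap D=\emptyset\), whereas the paper assumes \(D\) cofinal without loss of generality.
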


\begin{proof}
    That \(C\) is cofinal follows quickly from Theorem \ref{superset-Carpet} and the Heine-Borel theorem. To see that \(C\) is a support of the cone measure, work in the permutation model, let \(D\subseteq C\) be cofinal, and let \(a\in I\) be such that \(\pstab a\subseteq \stab D\). Let \(r>0\) be big enough such that \(a\subseteq B_r\), and find a flat S-carpet \(b\) in \(B_{2r}\) that contains \(a\) and whose intersection with each component \(U\) of \(B_r\setminus a\) is a flat S-carpet in \(\overline{U}\). 
    %Let \(b\in C\) be a flat-embedded carpet which fills \([-2r,2r]^n\) and fills each component of \([-2r,2r]^n\setminus a'\). 
    Now to show that \(\cone b\subseteq D\), let \(c\in C,d\in D\) such that \(b\subseteq c,d\). Let \(s,t\) be such that \(c,d\) are flat S-carpets in \(B_s,B_t\) respectively; observe \(s,t\geq 2r\). Let \(\varphi':\mathbb{R}_{\geq 0}\rightarrow\mathbb{R}_{\geq 0}\) be a homeomorphism which fixes the interval \([0,r]\) and is such that \(\varphi'(t)=s\). Define \(\varphi=id_{S^{n-1}}\times \varphi'\), and observe \(\varphi\) fixes \(B_r\) pointwise and hence fixes \(a'\) pointwise. Now let \(A=\overline{B_s\setminus B_r}\) observe that \(c\cap A\) and \(d\cap A\) are both flat S-carpets in \(A\), and Corollary \ref{Filled-homeomorphism} yields a homeomorphism \(\psi_0\) of \(A\) which fixes the boundary and maps \(\varphi(d)\cap A\) to \(c\cap A\). Additionally, we can proceed as in the proof of Theorem \ref{Carpet-cone} to find a self-homeomorphism \(\psi_1\) of \(B_r\) which fixes \(a\) and the boundary pointwise and maps \(d\cap B_r\) to \(c\cap B_r\). Now let \(\psi\) be the self-homeomorphism of \(\mathbb{R}^n\) obtained by taking the union of \(\psi_0,\psi_1\), and the identity on \(\mathbb{R}^n\setminus B_s\). The composition \(\psi\varphi\) is a self-homeomorphism which fixes \(a\) pointwise and maps \(d\) to \(c\). Since \(\pstab{a}\subseteq\stab{D}\), we conclude \(c\in D\), and so the result follows.
\end{proof}

We now consider results concerning totally disconnected sets. We first note the following theorem presented in \cite{Knaster}:
\begin{theorem}
    If \(C_1,C_2\) are homeomorphic to the Cantor set, \(b_1,b_2\) are nowhere dense subsets of \(C_1,C_2\) respectively, and \(h:b_1\rightarrow b_2\) is a homeomorphism, then \(h\) can be extended to a homeomorphism \(\overline{h}:C_1\rightarrow C_2\).
\end{theorem}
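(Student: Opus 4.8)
The plan is to obtain \(\overline h\) as the limit of a sequence of increasingly fine, mutually compatible clopen partitions of \(C_1\) and \(C_2\). I fix compatible metrics on both spaces and use the standard facts about a Cantor set: it is compact, metrizable and zero-dimensional, so it has a basis of clopen sets, and every nonempty clopen subset is again a Cantor set and can therefore be split into any prescribed finite number of nonempty clopen pieces of arbitrarily small diameter. I also use that \(b_1,b_2\) are closed, hence compact, so that \(h\) and \(h^{-1}\) are uniformly continuous; this uniform continuity is exactly what lets me control the diameters of images under \(h\).

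The heart of the argument is a refinement lemma: given clopen \(A\subseteq C_1\) and \(B\subseteq C_2\), closed nowhere dense \(a=b_1\cap A\) and \(b=b_2\cap B\) with \(h\restriction a\) a homeomorphism onto \(b\), and \(\varepsilon>0\), one can write clopen partitions \(A=A_1\sqcup\cdots\sqcup A_m\) and \(B=B_1\sqcup\cdots\sqcup B_m\) into pieces of diameter \(<\varepsilon\), matched so that \(h(a\cap A_i)=b\cap B_i\) for every \(i\). To prove it, first use uniform continuity of \(h\) and \(h^{-1}\) to partition \(a\) into finitely many relatively clopen pieces \(a_1,\dots,a_r\) with both \(\mathrm{diam}(a_j)\) and \(\mathrm{diam}(h(a_j))\) less than \(\varepsilon\). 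By zero-dimensionality, separate the (pairwise positively distant) compact pieces \(a_j\) by small pairwise disjoint clopen sets \(\tilde A_j\subseteq A\) with \(\tilde A_j\cap a=a_j\), and do the same on the \(B\) side with \(h(a_j)\) to get \(\tilde B_j\subseteq B\) with \(\tilde B_j\cap b=h(a_j)\). Crucially, since \(a\) is nowhere dense, \(A\setminus a\) contains a nonempty clopen set, so I may carry out this whole choice inside a proper clopen subset of \(A\) and thereby guarantee that \(A_0:=A\setminus\bigcup_j\tilde A_j\) is a nonempty clopen set disjoint from \(a\); the same on the other side yields nonempty clopen \(B_0\) disjoint from \(b\). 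Matching \(\tilde A_j\leftrightarrow\tilde B_j\) gives \(h(a\cap\tilde A_j)=h(a_j)=b\cap\tilde B_j\), and since \(A_0,B_0\) are nonempty Cantor sets meeting neither \(a\) nor \(b\), I split each into the same number \(N\) of small nonempty clopen pieces and pair them off arbitrarily.

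With the lemma established, I build the partitions by recursion. Stage \(0\) is the trivial matched pair \((C_1,C_2)\) carrying \(h\). Given a matched partition at stage \(n\), I apply the lemma to each matched pair \((A_i^n,B_i^n)\) with \(\varepsilon=2^{-n}\) to produce a refining matched partition at stage \(n+1\) whose pieces have diameter \(<2^{-n}\); the lemma applies because \(b_1\cap A_i^n\) is closed nowhere dense in \(A_i^n\) and the compatibility relation \(h(b_1\cap A_i^n)=b_2\cap B_i^n\) propagated by the construction makes \(h\restriction(b_1\cap A_i^n)\) a homeomorphism onto \(b_2\cap B_i^n\). Because the successive partitions refine each other, respect the matching, and have diameters tending to \(0\), each \(x\in C_1\) determines a unique descending chain of matched pieces \(A^0\supseteq A^1\supseteq\cdots\) whose counterparts \(B^0\supseteq B^1\supseteq\cdots\) meet in a single point; I set \(\overline h(x)\) equal to that point.

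Finally I verify that \(\overline h\) works. It is a bijection because the two partition systems are dual under the matching and the diameters shrink to \(0\); it is continuous, as is its inverse, because at every stage clopen pieces map to clopen pieces of vanishing diameter. It extends \(h\): for \(x\in b_1\) the compatibility condition forces \(h(x)\in B^n\) for all \(n\), whence \(h(x)=\overline h(x)\). I expect the refinement lemma to be the main obstacle, and within it the matching of the leftover clopen pieces \(A_0,B_0\): this is precisely where nowhere density is indispensable, since it supplies free clopen room in the complements of \(a\) and \(b\) that can be reshuffled at will without disturbing the prescribed action of \(h\).
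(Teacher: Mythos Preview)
The paper does not prove this theorem; it merely quotes it from \cite{Knaster} and uses it as a black box in Theorems~\ref{2w-cone} and~\ref{Cone Tot-Disc}. So there is no proof in the paper to compare against, and what you have written is a self-contained argument for the cited result.

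Your argument is correct and is essentially the classical Knaster--Reichbach back-and-forth construction. The refinement lemma is set up and justified properly: uniform continuity of $h$ and $h^{-1}$ lets you cut $a$ into finitely many relatively clopen pieces whose $h$-images are also small; zero-dimensionality lets you thicken these pieces to pairwise disjoint small clopen sets $\tilde A_j\subseteq A$ and $\tilde B_j\subseteq B$; and nowhere density supplies nonempty clopen remainders $A_0,B_0$ disjoint from $a,b$, which, being Cantor sets, can be split into equally many small clopen pieces and matched arbitrarily. The recursion and the passage to the limit are routine once the lemma is in hand.

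One point worth flagging: you explicitly assume that $b_1,b_2$ are closed, which the statement as written does not say. This hypothesis is in fact necessary---without it the conclusion fails (let $b_1$ be a convergent sequence with its limit deleted and $b_2$ a closed discrete countable set; both are countable, hence nowhere dense, and both carry the discrete topology, so any bijection $b_1\to b_2$ is a homeomorphism, yet no continuous extension to $C_1$ can exist since the images would have to converge). In every application the paper makes of this theorem the relevant sets are closed, so this is a harmless imprecision in the paper's phrasing rather than a gap in your argument; you are right to add the hypothesis.
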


From this theorem, we quickly get the following result:
\begin{theorem}\label{2w-cone}
    Let \(X=2^\omega\) denote the Cantor space with its standard topology. Let \(I\) be the ideal on \(X\) generated by closed nowhere dense sets. Let \(C\subset I\) be the set of perfect nowhere dense sets. \(C\) is a cofinal subset of \(I\) and is the support of a cone measure.
\end{theorem}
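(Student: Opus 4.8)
For the cone measure the Knaster extension theorem will do all the heavy lifting, so I would set things up to feed into it. Recall first that a nonempty perfect nowhere dense subset of \(2^\omega\) is compact, totally disconnected, and perfect, hence homeomorphic to the Cantor set by Brouwer's characterization; thus the Knaster theorem stated above applies to every member of \(C\) (and to the ambient \(2^\omega\) itself). The plan is to verify cofinality and the cone measure condition separately.

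For cofinality it suffices to enclose an arbitrary closed nowhere dense \(a\) in a perfect nowhere dense set, since for \(A\in I\) the closure \(\overline A\) is closed nowhere dense. I would write the dense open complement \(2^\omega\setminus a=\bigsqcup_i N_{t_i}\) as a disjoint union of basic clopen cylinders, drop a nonempty nowhere dense Cantor set \(K_i\) into each \(N_{t_i}\), and set \(b=a\cup\bigcup_i K_i\). Closedness holds because disjoint cylinders accumulating to a point \(x\) force \(x\notin\bigcup_i N_{t_i}\), hence \(x\in a\); nowhere density holds because, given any cylinder, one passes to a sub-cylinder disjoint from \(a\), enters one \(N_{t_i}\) it meets, and shrinks further to avoid the nowhere dense \(K_i\) there (hence all \(K_j\)); and perfectness holds because every cylinder around a point of \(a\) contains some \(N_{t_i}\), so the nonempty sets \(K_i\) accumulate at each point of \(a\), while points of each \(K_i\) are non-isolated within \(K_i\). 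Crucially, \(b\setminus a\) is then dense in \(b\), so \(a\) is nowhere dense in \(b\); this stronger property is what drives the cone measure argument.

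For the cone measure I would work in the permutation model and fix \(D\subseteq C\), which, being an element of \(W[[X]]\), has a support \(a\in I\) with \(\pstab{a}\subseteq\stab{D}\); replacing \(a\) by \(\overline a\) I may assume \(a\) is closed nowhere dense. Using the construction above, choose \(b\in C\) with \(a\subseteq b\) and \(a\) nowhere dense in \(b\), and propose \(b\) itself as the witness of the cone condition. The heart of the matter is the claim that \(\pstab{a}\) acts transitively on \(\cone{b}\). Given \(c,c'\in\cone{b}\), both are Cantor sets containing \(b\supseteq a\), and since \(a\) is nowhere dense in \(b\subseteq c,c'\) it is nowhere dense in each of \(c\) and \(c'\). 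Applying Knaster to the identity map \(a\to a\), viewed as a homeomorphism between the nowhere dense subset \(a\subseteq c\) and the nowhere dense subset \(a\subseteq c'\), yields a homeomorphism \(g\colon c\to c'\) fixing \(a\) pointwise; applying Knaster a second time, now with \(c\) and \(c'\) as nowhere dense subsets of the Cantor set \(2^\omega\), extends \(g\) to a self-homeomorphism \(\gamma\) of \(2^\omega\). Since \(\gamma\) restricted to \(c\) equals \(g\) and so fixes \(a\), we get \(\gamma\in\pstab{a}\) and \(\gamma(c)=c'\). As \(\pstab{a}\subseteq\stab{D}\), the set \(D\) is \(\pstab{a}\)-invariant, so the single orbit \(\cone{b}\) is either contained in \(D\) or disjoint from \(D\), which is exactly the required dichotomy.

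The main obstacle, and the real point of the argument, is fixing \(a\) pointwise while transporting \(c\) to \(c'\). Knaster's theorem only extends a homeomorphism between nowhere dense subsets, so its extension keeps \(a\) fixed precisely when \(a\) sits as a nowhere dense subset of both \(c\) and \(c'\); this is exactly why \(b\) must be chosen with \(a\) nowhere dense in \(b\), which then forces \(a\) to be nowhere dense in every \(c\in\cone{b}\). The two applications of Knaster --- first inside the Cantor sets \(c,c'\) to obtain \(g\) fixing \(a\), then inside \(2^\omega\) to globalize \(g\) --- are what convert the intrinsic extension theorem into an ambient homeomorphism lying in \(\pstab{a}\), which is what the permutation model sees.
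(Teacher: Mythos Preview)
Your argument is correct and follows essentially the same route as the paper: choose \(b\in C\) with \(a\) nowhere dense in \(b\), then apply the Knaster extension theorem twice---once inside the Cantor sets \(c,c'\in\cone b\) to extend \(\mathrm{id}_a\), and once inside \(2^\omega\) to globalize---to land in \(\pstab a\). Your write-up is somewhat more complete: you actually construct the cofinal \(b\) (the paper simply asserts its existence), and by phrasing the conclusion as ``\(\pstab a\) acts transitively on \(\cone b\)'' you obtain the required dichotomy for arbitrary \(D\subseteq C\), whereas the paper opens with ``let \(D\subseteq C\) be cofinal,'' a hypothesis that is not part of the definition and is in fact never used.
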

\begin{proof}
    Working in the permutation model, let \(D\subseteq C\) be a cofinal subset, and let \(a\in I\) be such that \(\pstab a\subseteq \stab D\). Let \(b\in C\) be a set such that \(a\subseteq b\) is nowhere dense; we claim that \(\cone b\subseteq D\): let \(b\subseteq c\in C\) and \(b\subseteq d\in D\). Observe that \(a\) is nowhere dense in \(c\) and \(d\), and that \(c\) and \(d\) are homeomorphic to the Cantor space. Hence \cite{Knaster} can be used to extend the identity map on \(a\) to a homeomorphism \(h:d\rightarrow c\). Apply \cite{Knaster} again to obtain a homeomorphism \(\overline{h}:2^\omega\rightarrow 2^\omega\), and observe that \(\overline{h}\in\pstab a\subseteq \stab D\), so \(c=h(d)\in D\).
\end{proof}

We next show that a similar result holds when looking at compact totally disconnect subsets of the Euclidean plane. The proof will be very similar, but we will need a different extension theorem given in \cite{Moise}:\begin{theorem}
    If \(M, M'\) are compact totally disconnected sets in \(\mathbb{R}^2\), and \(f:M\rightarrow M'\) is a homeomorphism, then \(f\) extends to a homeomorphism \(\bar{f}:\mathbb{R}^2\rightarrow\mathbb{R}^2\).
\end{theorem}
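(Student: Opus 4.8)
The plan is to prove the extension theorem by a nested-approximation (back-and-forth) argument whose only genuinely two-dimensional input is the Schoenflies theorem; this is exactly the feature that makes total disconnectedness sufficient in the plane, in contrast to \(\mathbb{R}^3\), where wild Cantor sets such as Antoine's necklace obstruct any such extension. First I would exploit total disconnectedness to produce, for each \(n\), a finite partition of \(M\) into relatively clopen pieces \(M^n_1,\dots,M^n_{k_n}\), each of diameter less than \(2^{-n}\) and refining the partition at stage \(n-1\). Since \(M\) is compact and totally disconnected, the pieces are pairwise disjoint compact sets at positive distance, so each can be enclosed in a closed Jordan disk \(D^n_i\), with the \(D^n_i\) pairwise disjoint and nested compatibly with the refinement. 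Using uniform continuity of \(f\) and \(f^{-1}\) (both \(M\) and \(M'\) are compact), I would arrange the matching partition of \(M'\) given by the pieces \(f(M^n_i)\), together with enclosing pairwise disjoint closed Jordan disks \(E^n_i\), refining the partition on whichever side is needed so that \emph{both} families of diameters tend to \(0\).

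Next I would build a sequence of ambient homeomorphisms \(h_n:\mathbb{R}^2\to\mathbb{R}^2\) with \(h_n(D^n_i)=E^n_i\) for each \(i\). The Schoenflies theorem supplies, for each matched pair of Jordan disks, a homeomorphism of the plane carrying one onto the other; piecing these together over the finitely many indices and taking the identity elsewhere produces \(h_n\). The crucial bookkeeping is to carry out stage \(n+1\) \emph{inside} the disks \(D^n_i\) already used at stage \(n\), so that \(h_{n+1}\) agrees with \(h_n\) off \(\bigcup_i D^n_i\); since these disks have diameter less than \(2^{-n}\), the successive corrections are supported in shrinking regions.

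Finally I would pass to the limit \(\bar f=\lim_n h_n\). Because \(h_{n+1}\) and \(h_n\) differ only inside disks of diameter less than \(2^{-n}\), and the images are disks of diameter tending to \(0\), the sequence is uniformly Cauchy and the limit \(\bar f\) is a well-defined continuous map; running the identical estimate on the inverses \(h_n^{-1}\) shows that \(\bar f\) is a homeomorphism. That \(\bar f\) extends \(f\) follows since each \(x\in M\) lies in a nested sequence of disks \(D^n_i\) collapsing to \(x\), whose images \(E^n_i\) collapse to \(f(x)\), forcing \(\bar f(x)=f(x)\).

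The main obstacle I anticipate is the convergence step: guaranteeing that the uniform limit of the \(h_n\) is an honest homeomorphism rather than merely a continuous surjection. This is where one invokes the standard convergence criterion for homeomorphisms, controlling not only \(\sup_x|h_{n+1}(x)-h_n(x)|\) but also the modulus of the inverse maps; this is precisely why the stage-\(n\) corrections must be confined to the shrinking disks \(D^n_i\), and why the diameters of \emph{both} the \(D^n_i\) and the \(E^n_i\) must be driven to zero simultaneously. Securing these matched, diameter-controlled disjoint-disk decompositions on both sides is the technical heart of the argument.
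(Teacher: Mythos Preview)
The paper does not supply its own proof of this statement; it is quoted from \cite{Moise} and used as a black box in the proof of Theorem~\ref{Cone Tot-Disc}, so there is nothing in the paper to compare against. That said, your outline is essentially the classical argument (and close in spirit to Moise's): exploit zero-dimensionality to build nested clopen partitions of \(M\), enclose the pieces in nested systems of Jordan disks on both sides with diameters tending to zero, realize the disk correspondences by ambient homeomorphisms coherent from one stage to the next, and pass to the uniform limit. Your identification of the Schoenflies theorem as the essential two-dimensional input, and the contrast with Antoine's necklace in \(\mathbb{R}^3\), are both correct.

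One point does need tightening. The phrase ``piecing these together over the finitely many indices and taking the identity elsewhere produces \(h_n\)'' does not work as written: a single application of Schoenflies yields a homeomorphism of the entire plane, not one supported near the disk in question, so there is no ``elsewhere'' on which to take the identity. What you actually need at each inductive step is the following consequence of iterated Schoenflies (equivalently, the classification of compact planar surfaces with boundary): given Jordan disks \(D,E\), a homeomorphism \(\varphi:\partial D\to\partial E\), and finite collections of pairwise disjoint closed Jordan disks \(D_1,\dots,D_m\) in the interior of \(D\) and \(E_1,\dots,E_m\) in the interior of \(E\), there is a homeomorphism \(D\to E\) extending \(\varphi\) and carrying each \(D_j\) onto \(E_j\). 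Applied inside each \(D^n_i\) (with \(\varphi=h_n\restriction\partial D^n_i\)), this is exactly what produces \(h_{n+1}\) agreeing with \(h_n\) off \(\bigcup_i D^n_i\); once this lemma is stated, your convergence argument and the verification that \(\bar f\restriction M=f\) go through as you describe.
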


\begin{theorem}\label{Cone Tot-Disc}
    Let \(X=\mathbb{R}^2\) be the 2-dimensional Euclidean space with standard topology, and let \(I\) be the ideal on \(X\) generated by compact totally disconnected sets. Let \(C\subseteq I\) be the set of perfect compact totally disconnected sets. \(C\) is a cofinal subset of \(I\) and is the support of a cone measure.
\end{theorem}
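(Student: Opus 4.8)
The plan is to follow the proof of Theorem \ref{2w-cone} almost verbatim, replacing the second application of the Cantor-set extension theorem of \cite{Knaster} with the planar extension theorem of \cite{Moise}, and replacing the (trivial) cofinality argument of the Cantor case with a short construction. The one genuinely new feature is that here the carriers $c,d$ of a cone are perfect totally disconnected sets sitting inside $\mathbb{R}^2$, so the homeomorphism between them that fixes the base set must be pushed out to the whole plane by \cite{Moise}, rather than extended internally as in $2^\omega$.

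First I would establish cofinality while simultaneously producing a convenient base set. Given $a\in I$, its closure $\overline a$ is a closed subset of a compact totally disconnected set, hence is itself compact and totally disconnected. Being a zero-dimensional compact metric space, $\overline a$ embeds homeomorphically onto a nowhere dense subset $A'$ of a standard planar Cantor set $\mathcal{C}\subseteq\mathbb{R}^2$ (embed $\overline a$ into $2^\omega$ and then into a nowhere dense copy of $2^\omega$, e.g.\ by inserting zeros in the odd coordinates). Since $\overline a$ and $A'$ are homeomorphic compact totally disconnected subsets of $\mathbb{R}^2$, the theorem of \cite{Moise} extends this homeomorphism to a self-homeomorphism $\phi$ of $\mathbb{R}^2$ with $\phi(\overline a)=A'$. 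Setting $b=\phi^{-1}(\mathcal{C})$ yields a perfect compact totally disconnected set, i.e.\ $b\in C$, with $a\subseteq\overline a\subseteq b$; this proves $C$ is cofinal. Moreover, as $A'$ is nowhere dense in $\mathcal{C}$, the set $\overline a$ is nowhere dense in $b$, and hence so is $a$.

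For the cone measure I would work in the permutation model: given $D\subseteq C$, choose $a\in I$ with $\pstab{a}\subseteq\stab{D}$. If $D$ is not cofinal, pick $e\in C$ lying below no member of $D$; then $\cone{e}\cap D=\emptyset$ and we are done, so assume $D$ is cofinal. Choose $b\in C$ as above, so that $a\subseteq b$ with $a$ nowhere dense in $b$; I claim $\cone{b}\subseteq D$. Let $c\in\cone{b}$ and, using cofinality of $D$, pick $d\in D$ with $b\subseteq d$. First observe that $a$ is nowhere dense in both $c$ and $d$: a nonempty set relatively open in $c$ and contained in $a$ would, since $a\subseteq b\subseteq c$, also be relatively open in $b$ and contained in $a$, contradicting nowhere density of $a$ in $b$; the same holds for $d$. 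Since $c,d$ are perfect, compact, totally disconnected and metrizable, they are Cantor sets, so the theorem of \cite{Knaster} extends $\mathrm{id}_a$ to a homeomorphism $h:d\to c$. Finally \cite{Moise} extends $h$ to a self-homeomorphism $\overline h$ of $\mathbb{R}^2$; as $\overline h$ is the identity on $a$ we get $\overline h\in\pstab{a}\subseteq\stab{D}$, and since $\overline h(d)=h(d)=c$ with $d\in D$ we conclude $c\in D$. Hence $\cone{b}\subseteq D$.

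The main obstacle is the step producing the homeomorphism $h:d\to c$ that fixes $a$ pointwise: the extension theorem of \cite{Knaster} requires $a$ to be nowhere dense in \emph{both} Cantor sets $c,d$, and this is precisely what forces the careful choice of $b$ (with $a$ nowhere dense in $b$) together with the observation that nowhere density is inherited upward along $b\subseteq c,d$. Everything else is a direct transcription of the Cantor-space argument, with \cite{Moise} supplying the ambient extension to $\mathbb{R}^2$ that the second application of \cite{Knaster} supplied internally in Theorem \ref{2w-cone}.
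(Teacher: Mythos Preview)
Your proposal is correct and follows essentially the same route as the paper: choose $b\in C$ containing $a$ with $a$ nowhere dense in $b$, and for $c\in\cone b$, $d\in D\cap\cone b$ use \cite{Knaster} to build a homeomorphism $d\to c$ fixing $a$ pointwise, then push it out to $\mathbb{R}^2$ via \cite{Moise}. You are simply more explicit than the paper about the cofinality of $C$, about the trivial case when $D$ is not cofinal, and about why nowhere density of $a$ in $b$ is inherited by $c$ and $d$.
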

\begin{proof}
Working in the permutation model, let \(D\subseteq C\) be a cofinal subset, and let \(a\in I\) be such that \(\pstab a \subseteq \stab D\). Let \(b\in C\) be a set such that \(a\subseteq b\) is nowhere dense; we claim that \(\cone b \subseteq D\): let \(b\subseteq c\in C\) and \(b\subseteq d\in D\). Observe that \(a\) is a nowhere dense subset of \(c\) and \(d\), and use the theorem from \cite{Knaster} to extend the identity on \(a\) to a homeomorphism \(\varphi:d\rightarrow c\) between \(d\) and \(c\). From here, extend \(\varphi\) to a homeomorphism of the entire plane \(\overline{\varphi}:\mathbb{R}^2\rightarrow\mathbb{R}^2\) \cite{Moise}. Since \(\overline{\varphi}\in\pstab a,\) we know \(\overline{\varphi}\in\stab D\), and hence \(c=\varphi(d)\in D\).
%    Let \(c\) be compact totally disconnected - without loss of generality, \(c\) can be enlarged so that it is perfect and \(a\) is a nowhere dense subset of \(c\). A theorem in \cite{Knaster} states that any homeomorphism between nowhere dense subsets of two perfect totally disconnected spaces can be extended to a homeomorphism between the two perfect totally disconnected spaces. In particular, we use that result here to extend the identity map on \(a\) to obtain a homeomorphism \(\varphi:c\rightarrow b\) which fixes \(a\) pointwise. Let \(\overline{\varphi}:\mathbb{R}^2\rightarrow\mathbb{R}^2\) be the extension to a homeomorphism of the plane guaranteed by \cite{Moise}. Now for each component \(U_i\) of \(S^1\setminus a\), let \(\psi_i:U_i\rightarrow U_i\) be a boundary preserving homeomorphism such that \(\psi_i(\overline{\phi}(c)\cap U_i)\subseteq b_i\). Let \(\psi=\bigcup_{i\in\omega}\psi_i\) be a homeomorphism of \(S^1\), and note that since the \(\psi_i\) were boundary-preserving homeomorphisms, \(\psi\) is well-defined and also fixes \(a\) pointwise. Now extend \(\psi\) to \(\overline{\psi}:\mathbb{R}^2\rightarrow\mathbb{R}^2\) by the map \((r,\theta)\mapsto (r,\psi(\theta))\), and note that the composition \(\overline{\psi}\circ \overline{\varphi}\) witnesses that \(b\) is \(a\)-large: it fixes \(a\) pointwise and maps \(c\) into \(b\).
\end{proof}

We note that in \(\mathbb{R}^3\), the extension theorem given in \cite{Moise} does not hold since there are multiple distinct embeddings of \(2^\omega\) in \(\mathbb{R}^3\). Hence the above argument does not immediately generalize to higher dimensions. It is an open problem to either show that the ideal of compact totally disconnected sets supports a cone measure in \(\mathbb{R}^3\) or to find a specific counterexample witnessing that it does not.

\printbibliography
\end{document}